\titleformat{\chapter}[wrap]
{\bfseries\Huge} {\vspace{-2cm}\filleft\Huge \chaptertitlename
\hspace{.5cm}\thechapter} {2ex} {\titlerule[1pt] \vspace{1pt}
\titlerule
\vspace{1ex}%
\filright}
[\vspace{1ex}%
\titlerule]
\titleformat{\section}[hang]
{\Large\sffamily}
{\thesection.}
{2pt}
{\vspace{2pt}\Large}
\title{\large{\textbf{ALGEBRAIC ELEMENTS OVER THE RING OF POWER SERIES}} \footnotetext{
 keywords: Generalized Puiseux series, Algebraic series, Ring of power series\\
Universidad Espanol, Acapulco de Ju\'arez, Guerrero, M\'exico, saavedravicc@gmail.com }}
 \author{\small{\textbf{V. M. Saavedra}}
}
\date{}
\begin{document}
\maketitle

\begin{abstract} 

We give a necessary and sufficient condition for a type of generalized power series to be algebraic over the ring of power series with coefficients in a finite field. This result extend a classical theorem of Huang-Stef$ \mathrm{\breve{a}} $nescu.
\end{abstract}

 \newtheorem{thm}{Theorem}[section]
 \newtheorem{coro}[thm]{Corollary}
 \newtheorem{theo}[thm]{Theorem}
 \newtheorem{lema}[thm]{Lemma}
 \newtheorem{prop}[thm]{Proposition}
 \theoremstyle{definition}
 \newtheorem{defi}[thm]{Definition}
 \theoremstyle{remark}
 \newtheorem{rem}[thm]{Remark}
 \newtheorem*{ej}{Example}
 \numberwithin{equation}{section}

 \section{Introduction}
 
 This work is concerning to the problem of describing algebraic elements over the ring of power series.
 It is well known that when $ \Bbbk $ is an algebraically closed field of characteristic zero, the algebraic closure of the field of power series $ \Bbbk((t))$ is the so-called field of Puiseux series. That is, the field formed by the union of all the fields $ \Bbbk((t^{1/d})) $ where $ d $ is a positive integer. This result is known as the Newton-Puiseux theorem (see for example \cite{Br} for a formal presentation). When $ \Bbbk $ is an algebraically closed field of characteristic $ p>0, $ Chevalley \cite{che} noted that the polynomial
 $$ Z^{p}-Z-t^{-1}\in \Bbbk((t))[Z] $$ has no root in the field of Puiseux series. In fact, the Abhyankar$ ^{,} $s paper \cite{Ab} shows that this polynomial can be factored as follows
 
 $$  Z^{p}-Z-t^{-1}=\prod_{i=0}^{p-1}(Z-i-\sum_{j=1}^{\infty}t^{-\frac{1}{p^{j}}}).$$

Using this factorization Huang \cite{H} considered generalized power series of the form 
 
 $$f(t)=\sum_{i } a_{i}t^{\frac{s_{i}}{mp^{n_{i}}}}$$ where $ m\in\mathbb{Z}_{>0}, $ $n_{i}\in\mathbb{Z}_{\geq 0}  $ and $ s_{i}\in\mathbb{Z}$ and the support of $ f $ is a well-ordered set. The set of all generalized power series of this type is a field and Huang \cite{H} proved that contains an algebraic closure of the field $ \Bbbk((t)).$  If $ \Bbbk $ is a perfect field of positive characteristic, the algebraic closure of $ \Bbbk((t)) $ consist of the field of the so-called twist-recurrent series, and it is a result of Kedlaya \cite{K} (see also \cite{K11}). The twist-recurrent series are generalized power series that hold two technical conditions, one over the exponents of the series and another one over the coefficients (see definition in \cite{K}). 
  As example of this type of series are the series that appear in the following theorem given independently by Huang \cite{H} and Stef$ \mathrm{\breve{a}} $nescu \cite{S} (see also \cite{S1}).
  \begin{theo}\label{771}(Huang, Stef$ \breve{a} $nescu) The series $ f(t)=\sum_{i=1 }^{\infty} a_{i}t^{\frac{-1}{p^{i}}}\in \overline{\mathbb{F}_{p}}((t^{\mathbb{Q},p}))$ is algebraic over $ \overline{\mathbb{F}_{p}}((t)) $ if and only if the sequence $ \lbrace a_{i}\rbrace $ is eventually periodic.
  \end{theo}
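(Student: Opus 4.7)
I would prove $(\Leftarrow)$ by a direct Frobenius computation: assume $\{a_i\}$ is eventually periodic with period $\ell$ after some index $N$, so that the coefficients take finitely many values, all in some $\mathbb{F}_{p^m}$. Split $f = A + g$ with $A = \sum_{i=1}^{N} a_i\, t^{-1/p^i}$, which lies in the finite extension $\overline{\mathbb{F}_p}(t^{1/p^N})$ and is therefore algebraic. After the substitution $s = t^{1/p^N}$ the tail becomes $\tilde g = \sum_{j \ge 1} a_{j+N}\, s^{-1/p^j}$, whose coefficient sequence is purely periodic from the first index. Choose $\ell' = \mathrm{lcm}(\ell, m)$ so that $a_{i+\ell'} = a_i$ and $a_i^{p^{\ell'}} = a_i$; computing $\tilde g^{p^{\ell'}}$ termwise and reindexing $j \mapsto j - \ell'$ leaves only boundary terms with non-positive integer exponents in $s$, hence $\tilde g^{p^{\ell'}} - \tilde g \in \mathbb{F}_{p^m}[s^{-1}]$. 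This yields an Artin--Schreier-type equation for $\tilde g$ over $\mathbb{F}_{p^m}((s))$, so $f$ is algebraic over $\overline{\mathbb{F}_p}((t))$.

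For $(\Rightarrow)$, assume $f$ is algebraic. The minimal polynomial of $f$ has only finitely many coefficients, so $f$ is algebraic over $\mathbb{F}_q((t))$ for some $q = p^m$; the unique valuation extension on the finite extension $L = \mathbb{F}_q((t))(f)$ has residue field a finite $\mathbb{F}_{q'}$, forcing each $a_i \in \mathbb{F}_{q'}$. Writing $q' = p^{m'}$ and using $a_i^{q'} = a_i$, a direct computation yields
\[ f^{(q')^k} \;=\; P_k(t^{-1}) \;+\; \sum_{j \ge 1} a_{j + km'}\, t^{-1/p^j}, \qquad P_k \in \mathbb{F}_{q'}[t^{-1}]. \]
Since $\dim_{\mathbb{F}_{q'}((t))} L$ is finite, say at most $n$, the $n+1$ elements $f^{(q')^0}, f^{(q')^1}, \ldots, f^{(q')^n}$ are linearly dependent: $\sum_{k=0}^n \alpha_k f^{(q')^k} = 0$ with some $\alpha_k \in \mathbb{F}_{q'}((t))$ nonzero. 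Writing $\alpha_k = \sum_\nu b_{k,\nu}\, t^\nu$, the exponents $\nu - 1/p^j$ (for $\nu \in \mathbb{Z}$ and $j \ge 1$) are pairwise distinct, so the fractional-exponent part of the vanishing sum forces $\sum_{k=0}^n b_{k,\nu}\, a_{j + km'} = 0$ for every $\nu$ and every $j \ge 1$. Choosing $\nu$ with some $b_{k,\nu} \ne 0$ produces a nontrivial $\mathbb{F}_{q'}$-linear recurrence on each residue class of $\{a_j\}$ modulo $m'$; each such subsequence is valued in the finite set $\mathbb{F}_{q'}$, so it is eventually periodic, and hence so is $\{a_j\}$.

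The main obstacle is the first step of $(\Rightarrow)$, the descent that forces the coefficients $a_i$ to lie in a single finite subfield of $\overline{\mathbb{F}_p}$: this relies on knowing that any generalized-series representation of an element of a finite extension of $\mathbb{F}_q((t))$ has coefficients in the residue field of the (unique) valuation extension, a fact requiring care because the ambient field $\overline{\mathbb{F}_p}((t^{\mathbb{Q},p}))$ a priori allows coefficients in the full $\overline{\mathbb{F}_p}$. Once this descent is secure, the remainder is clean linear algebra together with the standard fact that linearly recurrent sequences over a finite field are eventually periodic.
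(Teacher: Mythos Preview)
The paper does not prove this theorem; it is stated in the introduction as a known result of Huang and Stef$\mathrm{\breve{a}}$nescu, and the paper in fact \emph{invokes} Kedlaya's generalization of it (\cite[Theorem~15]{K}) inside the proofs of Lemmas~\ref{Afe} and~\ref{AB}. So there is no ``paper's proof'' to compare against.

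Your argument is correct and follows the classical route. The descent step you flag as the main obstacle can be secured directly. With $L=\mathbb{F}_q((t))(f)$ carrying the unique extension $\nu$ of the $t$-adic valuation, one has $tf^{p}=a_1^{p}+\sum_{i\ge 2}a_i^{p}t^{\,1-1/p^{i-1}}\in L$ of valuation $0$, so its residue $a_1^{p}$ lies in the residue field $\kappa_L=\mathbb{F}_{q'}$. Since $L$ is complete, Hensel's lemma lifts $\mathbb{F}_{q'}$ into $L$ as the constant subfield (the roots of $X^{q'}-X$ in the ambient Hahn field are exactly the constants in $\mathbb{F}_{q'}$), hence $a_1^{p}t^{-1}\in L$ and $f^{p}-a_1^{p}t^{-1}=\sum_{i\ge 1}a_{i+1}^{p}t^{-1/p^{i}}\in L$ has the same shape as $f$; induction gives all $a_i\in\mathbb{F}_{q'}$. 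After that, your linear-dependence and pigeon-hole steps are precisely the content of the paper's Lemma~\ref{even}, applied to each residue class modulo $m'$.

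For context, the paper's own two-variable argument (Theorem~\ref{u}) reaches the analogous recurrence by a different device: working over a fixed finite field $\mathbb{F}$ from the outset, it uses Lemma~\ref{L1}, whose one-variable version says that series supported in $(-1,0]$ which are linearly dependent over $\mathbb{F}((t))$ are already linearly dependent over $\mathbb{F}$. This replaces your valuation-theoretic descent by a pure support argument, at the cost of having assumed the coefficient field finite from the start---which is exactly the hypothesis gap between Theorem~\ref{771} and the paper's Corollary~\ref{c}.
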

 
  Theorem \ref{771} can be also deduced from the main result of \cite{K}. S. Vaidya \cite{V} extend the criterion of Huang-Stef$ \mathrm{\breve{a}} $nescu to a certain type of functions, in the case that $ \Bbbk $ is not equal to algebraic closure of its prime field.


 In this paper we present the analogue to Theorem \ref{771} for generalized power series in two variables  with coefficients in a finite field (Corollary \ref{c}).

\section{Generalized power series}
 Let $ \Gamma $ be a totally ordered group and let $\Bbbk $ be a field. The field of Hahn series $ \Bbbk((t^{\Gamma})) $ is defined to be the collection of all elements of the form $$ f=\sum_{\alpha\in \Gamma}c_{\alpha}t^{\alpha}  $$ with $ c_{\alpha}\in \Bbbk $ such that the set of exponents of $ f $ is a well-ordered set. The sum and product are given by 
 
 $$ \sum_{\alpha\in \Gamma}c_{\alpha}t^{\alpha} +\sum_{\alpha\in \Gamma}d_{\alpha}t^{\alpha}=\sum_{\alpha\in \Gamma}(c_{\alpha}+d_{\alpha})t^{\alpha} $$
 
 and $$(\sum_{\alpha^{\prime}\in \Gamma}c_{\alpha^{\prime}}t^{\alpha^{\prime}})(\sum_{\alpha^{\prime\prime}\in \Gamma}d_{\alpha^{\prime\prime}}t^{\alpha^{\prime\prime}})= \sum_{\alpha\in\Gamma}\sum_{\alpha^{\prime}+\alpha^{\prime\prime}=\alpha}c_{\alpha^{\prime}}d_{\alpha^{\prime\prime}}t^{\alpha}.$$
 
 The \textbf{support} of a series $ f $ is the set $ \lbrace \alpha\in \Gamma\mid c_{\alpha}\neq 0   \rbrace.$ The field $ \Bbbk((t^{\Gamma})) $ is also called the field of \textbf{generalized power series }over $\Bbbk $ with support in $ \Gamma. $
It is known that when $ \Bbbk $ is algebraically closed and $ \Gamma $ is a divisible group, the field $ \Bbbk((t^{\Gamma})) $ is algebraically closed \cite{kap}.
 
Rayner noted that we can take the set of series with support in a proper subfamily of the family of all well-ordered subsets of $ \Gamma $ and still lead a field.
He called this subfamily a field-family (see definition in \cite{Ra1}). With the notion of field-family, a family of  of algebraically closed fields of series containing the ring of power series in several variables is given in 
\cite[Theorem 5.3]{kk}.

 Let us denote by $ \Bbbk((t_{1}^{\mathbb{Q}},t_{2}^{\mathbb{Q}})),$ the field of generalized power series with coefficients in $ \Bbbk $ and support a well-ordered subset of $ \mathbb{Q}\times \mathbb{Q}. $
 

 
 
 \begin{lema}\label{L1}
Suppose that $ f_{1},...,f_{l} \in \Bbbk((t_{1}^{\mathbb{Q}},t_{2}^{\mathbb{Q}})) $ with support in $ (-1,0]\times (-1,0] $ and that $ f_{1},...,f_{l} $ are linearly dependent over $\Bbbk((t_{1},t_{2})).$ Then $  f_{1},...,f_{l} $ are also linearly dependent over $\Bbbk.  $
\end{lema}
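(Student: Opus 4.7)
The strategy is to exploit the disjointness of the fractional parts of the supports. Each $g_i \in \Bbbk((t_1,t_2))$ has exponents in $\mathbb{Z}_{\geq 0}^2$, each $f_i$ has exponents in $(-1,0]^2$, and every residue class $(\bar\alpha,\bar\beta) \in (\mathbb{Q}/\mathbb{Z})^2$ has a unique representative $(\alpha_0,\beta_0) \in (-1,0]^2$. Consequently every exponent $(\gamma,\delta)$ appearing in a product $g_i f_i$ decomposes uniquely as $(m+\alpha_0,\, n+\beta_0)$ with $(m,n)\in\mathbb{Z}_{\geq 0}^2$, which allows any $\Bbbk((t_1,t_2))$-relation among the $f_i$ to be split into independent residue components.

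Starting from a supposed nontrivial relation $\sum_{i=1}^{l} g_i f_i = 0$ with at least one $g_i \neq 0$, I would, for each residue $(\bar\alpha,\bar\beta)$, collect the monomials of the sum whose exponents lie in $(\alpha_0,\beta_0)+\mathbb{Z}_{\geq 0}^2$. By the uniqueness above, the portion of $f_i$ with residue $(\bar\alpha,\bar\beta)$ is the single monomial $a_i(\bar\alpha,\bar\beta)\, t_1^{\alpha_0} t_2^{\beta_0}$ (with $a_i(\bar\alpha,\bar\beta)$ possibly zero), and the matching portion of $g_i f_i$ is $a_i(\bar\alpha,\bar\beta)\, t_1^{\alpha_0} t_2^{\beta_0}\, g_i$. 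Setting this residue component of the total sum to zero and dividing by the monomial yields
\[
\sum_{i=1}^{l} a_i(\bar\alpha,\bar\beta)\, g_i \;=\; 0 \quad\text{in } \Bbbk((t_1,t_2))
\]
for every residue $(\bar\alpha,\bar\beta)$.

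To conclude, set $N := \{\lambda \in \Bbbk^l : \lambda_1 g_1 + \cdots + \lambda_l g_l = 0\}$; this is a proper $\Bbbk$-subspace of $\Bbbk^l$ because some $g_i$ is nonzero. The previous step places every vector $\bigl(a_1(\bar\alpha,\bar\beta),\ldots,a_l(\bar\alpha,\bar\beta)\bigr)$ inside $N$, so I would choose a nonzero linear form $\ell(\lambda)=\sum_i \mu_i\lambda_i$ that vanishes on $N$ (available because $N \subsetneq \Bbbk^l$ has a nonzero annihilator in $(\Bbbk^l)^{\ast}$). Then $\sum_i \mu_i\, a_i(\bar\alpha,\bar\beta)=0$ for every residue, which is exactly the statement that every coefficient of $\sum_i \mu_i f_i$ vanishes. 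Hence $\sum_i \mu_i f_i = 0$ with $(\mu_1,\ldots,\mu_l) \neq 0$, the desired $\Bbbk$-linear dependence.

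The one technical point I expect to verify carefully is that the residue decomposition is legitimate for series with well-ordered (rather than finite) support: one must check that for each exponent $(\gamma,\delta)$ the coefficient of $t_1^\gamma t_2^\delta$ in $g_i f_i$ collapses to the single product $c_{i,m,n}\, a_{i,\alpha_0,\beta_0}$, thanks to the uniqueness of the decomposition $(\gamma,\delta) = (m+\alpha_0,\, n+\beta_0)$. Once this bookkeeping is in place, the remainder is elementary finite-dimensional linear algebra.
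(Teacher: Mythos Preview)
Your argument is correct, but it runs dual to the paper's and takes one extra step. The paper expands the \emph{integer side}: writing each $\varphi_k=\sum_{n,m}\varphi_{k,(n,m)}t_1^nt_2^m\in\Bbbk[[t_1,t_2]]$, it observes that $(\sum_k\varphi_{k,(n,m)}f_k)t_1^nt_2^m$ has support in the box $(n-1,n]\times(m-1,m]$, and since these boxes are pairwise disjoint, each $\sum_k\varphi_{k,(n,m)}f_k$ must vanish. Picking any $(n,m)$ with not all $\varphi_{k,(n,m)}$ zero immediately gives the $\Bbbk$-dependence. You instead expand the \emph{fractional side}, slicing each $f_i$ into its residue components and obtaining, for every residue, a relation $\sum_i a_i(\bar\alpha,\bar\beta)g_i=0$ among the $g_i$; you then need the additional dual-space step (a nonzero functional annihilating the proper subspace $N$) to translate this back into a $\Bbbk$-relation among the $f_i$. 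Both routes rest on the same disjointness observation, but the paper's is one line shorter because it lands directly on scalar coefficients times the $f_k$ rather than scalar coefficients times the $g_i$. Your approach has the mild advantage that the final extraction is packaged as clean linear algebra, which would generalise more readily if the $f_i$ were replaced by objects without an obvious ``box'' decomposition. One small cleanup: you assert that each $g_i$ has exponents in $\mathbb{Z}_{\ge 0}^2$, but elements of the fraction field $\Bbbk((t_1,t_2))$ need not; you should first clear denominators to reduce to $g_i\in\Bbbk[[t_1,t_2]]$, exactly as the paper does.
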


\begin{proof} Since $ f_{1},...,f_{l} $ are linearly dependent over $\Bbbk((t_{1},t_{2})),$ we can obtain a nonzero linear relation of the form,

 \begin{equation}\label{equ2}
\begin{array}{r@{\hspace{1 pt}} c@{\hspace{1 pt}}c@{\hspace{4pt}}l}
&\varphi_{1}f_{1}+\cdots +\varphi_{l}f_{l}=0,
\end{array}
\end{equation}
where $ \varphi_{k}\in \Bbbk[[t_{1},t_{2}]] $ 
 $ \forall k=1,...,l. $ We can write $ \varphi_{k}=\sum_{n,m}\varphi_{k,(n,m)}t_{1}^{n}t_{2}^{m} $ for some $ \varphi_{k,(n,m)}\in \Bbbk. $ Therefore

 \begin{equation}\label{equ1}
\begin{array}{r@{\hspace{1 pt}} c@{\hspace{1 pt}}c@{\hspace{4pt}}l}
&0= \sum_{n,m}(\sum_{k=1}^{l}\varphi_{k,(n,m)}f_{k})t_{1}^{n}t_{2}^{m}.
\end{array}
\end{equation} 
  
 Now note that the support of $ (\sum_{k=1}^{l}\varphi_{k,(n,m)}f_{k})t_{1}^{n}t_{2}^{m}$ is contained in $ (n-1,n]\times(m-1,m] $ and thus these supports are disjoint for different $ n $ and $ m. $ This implies that the summand in (\ref{equ1}) must be zero for each $ (n,m)$ and thus $\sum_{k=1}^{l}\varphi_{k,(n,m)}f_{k}=0 $ for each $(n,m).$ Note that the $ \varphi_{k,(n,m)} $ cannot all be zero because $ \varphi_{1},..,\varphi_{l} $ would have all been zero. It follows that 
 $ f_{1},...,f_{l} $ are linearly dependent over $\Bbbk.$
 \end{proof}

 From now on $  \mathbb{F} $ will be a finite field.\\

  Denote $ \mathrm{A}_{p}:=\lbrace 0,1,...,p-1\rbrace$ and for $ c $ a nonnegative integer, let $ \mathrm{T}_{c} $ be the subset given by 
$$ \mathrm{T}_{c}:=\bigg \{ -\frac{b_{1}}{p}-\frac{b_{2}}{p^{2}}-\cdots \mid b_{i}\in \mathrm{A}_{p},\sum b_{i}\leq c \bigg \}.$$
 
 We recall that a sequence $ a_{n} $ is \textbf{eventually periodic} if there exist $ s $ and $ m $ such that $ a_{n+s}=a_{n} $ for all $ n\geq m$.\\

 From the proof of Lemma 2.6 in \cite{S}, we can extract the following lemma.

  \begin{lema} \label{even}
  
  Consider a sequence $ \lbrace a_{n} \rbrace \subset \mathbb{F}.$ Suppose that there is $ d, $ and $ c_{0},...,c_{d} $ not all zero, such that $ c_{0}a_{n}+c_{1}a_{n+1}^{p}+\cdots + c_{d}a_{n+d}^{p^{d}}=0,$ $ \forall n\geq k_{0} $ for some $ k_{0}\in \mathbb{Z}_{>0}.$  Then $ \lbrace a_{n} \rbrace $ becomes eventually periodic.
  
   \end{lema}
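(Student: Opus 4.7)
The plan is to convert the given algebraic relation into an honest deterministic recurrence on $\mathbb{F}$, then finish by the pigeonhole principle applied to a finite state space.

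First I would normalize the relation. Let $j$ be the largest index with $c_j\neq 0$; if $j=0$, the relation reduces to $c_0 a_n = 0$ with $c_0 \neq 0$, so $a_n = 0$ for all $n\geq k_0$ and we are done. Otherwise, solving for the highest-order term yields
\[
a_{n+j}^{p^{j}} \;=\; -c_j^{-1}\bigl(c_0 a_n + c_1 a_{n+1}^{p} + \cdots + c_{j-1}a_{n+j-1}^{p^{j-1}}\bigr)
\qquad (n\geq k_0).
\]

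The key observation is that $\mathbb{F}$ is finite, so the Frobenius endomorphism $\phi\colon x\mapsto x^p$ is an automorphism of $\mathbb{F}$ and therefore has an inverse $\phi^{-1}$. Applying $\phi^{-j}$ to both sides (and using that $\phi^{-j}$ is additive and fixes $\mathbb{F}$ setwise) expresses $a_{n+j}$ as a fixed polynomial function
\[
a_{n+j} \;=\; F(a_n, a_{n+1}, \dots, a_{n+j-1})
\]
with coefficients in $\mathbb{F}$, valid for every $n\geq k_0$.

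Next I would package this as a discrete dynamical system. Set $v_n := (a_n, a_{n+1}, \dots, a_{n+j-1}) \in \mathbb{F}^{j}$ for $n\geq k_0$. Then the recurrence above gives $v_{n+1} = T(v_n)$ for the fixed map $T(x_0,\dots,x_{j-1}) := (x_1,\dots,x_{j-1},F(x_0,\dots,x_{j-1}))$.

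Finally, since $\mathbb{F}^{j}$ is a finite set, the forward orbit $\{v_n : n\geq k_0\}$ under $T$ must contain a repetition: there exist $k_0 \leq m < m+s$ with $v_m = v_{m+s}$. Because $T$ is deterministic, this forces $v_{n+s}=v_n$ for every $n\geq m$, and reading off the first coordinate gives $a_{n+s}=a_n$ for all $n\geq m$, which is exactly eventual periodicity.

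The only nontrivial point is the passage from the algebraic relation to a genuine recursion, which rests entirely on the invertibility of Frobenius on a finite field; once that step is made, everything reduces to the standard pigeonhole argument for orbits of a self-map of a finite set.
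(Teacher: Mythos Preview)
Your proof is correct and follows essentially the same route as the paper: solve the relation for the highest-index term, use that Frobenius is bijective on the finite field $\mathbb{F}$ so that $a_{n+j}$ is determined by the $j$-tuple $(a_n,\dots,a_{n+j-1})$, and then apply pigeonhole to the finite state space $\mathbb{F}^{j}$ to force eventual periodicity. Your packaging as an explicit dynamical system $v_{n+1}=T(v_n)$ and your separate treatment of the degenerate case $j=0$ are slightly more explicit than the paper's version, but the underlying argument is the same.
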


 \begin{proof} We may suppose that $ c_{d}\neq 0.$  Dividing by the constant $ c_{d} $ we get 
 
 $$ a_{n+d}^{p^{d}}=-c_{d}^{-1}(c_{0}a_{n}+c_{1}a_{n+1}^{p}+\cdots + c_{d-1}a_{n+d-1}^{p^{d-1}})$$

  Thus for every $ n\in\mathbb{Z}_{>0}, $ we have that $  a_{n+d} $ is completely determined by the $ d- $tuple $ (a_{n},a_{n+1},...,a_{n+d-1}).$ Since $a_{n}\in \mathbb{F}\,$  $ \forall n $  and $ \mathbb{F} $ is finite, we get that the set 
   $$ \lbrace  (a_{n},a_{n+1},...,a_{n+d-1})\mid n\in\mathbb{Z}_{>0}\rbrace $$  is finite.
 Therefore there are $ r, t \in\mathbb{Z}_{>0},$ $ r\neq t $ such that $ (a_{r},a_{r+1},...,a_{r+d-1})=
 (a_{t},a_{t+1},...,a_{t+d-1}).$ This implies that $ a_{r+d}^{p^{d}}=a_{t+d}^{p^{d}} $ and then $ a_{r+d}=a_{t+d}.$ But this implies that $ (a_{r+1},...,a_{r+d})= (a_{t+1},...,a_{t+d})$ and then $ a_{r+d+1}=a_{t+d+1}.$ In general it follows that $ a_{r+d+k}=a_{t+d+k}$ for every $ k\in\mathbb{Z}_{\geq 0}.$ We may suppose that $ r<t $ and let $ s:=t-r. $ Then
 $ a_{n+s}=a_{n+t-r}=a_{d+t+n-r-d}.$ Therefore if $ n\geq r+d, $ we get that $ a_{d+t+n-r-d}=a_{d+r+n-r-d}=a_{n}.$ That is,
 $ a_{n+s} =a_{n}$ for every $ n\geq m:=\mathrm{max}(r+d, k_{0}).$
  \end{proof}

   \begin{lema}\label{eti}
     Let $ f=\sum_{i,j}f_{(i,j)}t_{1}^{i}t_{2}^{j}\in \mathbb{F}((t_{1}^{\mathbb{Q}},t_{2}^{\mathbb{Q}})) $ be a series with support in $\mathrm{T}_{c}\times \mathrm{T}_{c}.$ Suppose that there exist positive integers $ M,N $ and $ R $ such that there are $ d_{0},...,d_{RN-1}\in \mathbb{F} $ not all zero such that every sequence $ \lbrace a_{n}\rbrace_{n=0}^{\infty} $ of the form,
$$ a_{n}=f_{(-\frac{\upsilon_{1}}{p^{i_{1}+n}}-\cdots-\frac{\upsilon_{e}}{p^{i_{e}+n}},\quad-\frac{\omega_{1}}{p^{j_{1}+n}}-\cdots-\frac{\omega_{k}}{p^{j_{k}+n}})}$$ satisfies
    \begin{equation}\label{e1}
\begin{array}{r@{\hspace{1 pt}} c@{\hspace{1 pt}}c@{\hspace{4pt}}l}
  d_{0}a_{n}^{p}+d_{1}a_{n+1}^{p^{2}}+\cdots + d_{RN-1}a_{n+RN-1}^{p^{RN}}=0
\end{array}
 \end{equation} for all $ n\geq M.$
If $ v$ and $ p^{RN-1+M}v$ are in $ \mathrm{T}_{c} \times \mathrm{T}_{c},$ then $$ d_{0}f_{p^{RN-1}v}^{p}+d_{1}f_{p^{RN-2}v}^{p^{2}}+\cdots +d_{RN-1}f_{v}^{p^{RN}}=0.$$  
\end{lema}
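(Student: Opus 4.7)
The plan is to apply the hypothesis to a single, carefully chosen sequence $\{a_n\}$ built from the $p$-adic expansion of $v$, so that the relation (\ref{e1}) at the specific index $n=M$ collapses to the asserted identity.

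First I would write $v = (v_1, v_2)$ in its unique canonical form coming from the definition of $\mathrm{T}_c$:
$$v_1 = -\sum_{h=1}^{e} \frac{\upsilon_h}{p^{I_h}}, \qquad v_2 = -\sum_{h=1}^{k} \frac{\omega_h}{p^{J_h}},$$
with $\upsilon_h,\omega_h \in \mathrm{A}_p\setminus\{0\}$ and $I_h, J_h$ distinct positive integers. Then I would translate the hypothesis $p^{RN-1+M}v \in \mathrm{T}_c\times\mathrm{T}_c$ into a lower bound on these exponents: since
$$p^{RN-1+M}v_1 = -\sum_{h}\frac{\upsilon_h}{p^{I_h-RN+1-M}}$$
must lie in $\mathrm{T}_c \subset (-1,0]$, any term with $I_h - RN + 1 - M \le 0$ would contribute a nonzero negative integer (of absolute value at least $1$), and all other terms are non-positive, contradicting membership in $(-1,0]$. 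Hence $I_h \ge RN+M$ for every $h$, and similarly $J_h \ge RN+M$.

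Set $i_h := I_h - (RN-1) - M \ge 1$ and $j_h := J_h - (RN-1) - M \ge 1$, and define
$$a_n := f_{\left(-\sum_{h}\upsilon_h/p^{i_h+n},\; -\sum_{h}\omega_h/p^{j_h+n}\right)} \qquad (n \ge 0).$$
This is a sequence of the form prescribed in the hypothesis, so it satisfies (\ref{e1}) for every $n \ge M$. I would then apply (\ref{e1}) at the single value $n=M$ and compute $a_{M+r}$ for $r=0,\ldots,RN-1$: the $t_1$-exponent equals
$$-\sum_{h}\frac{\upsilon_h}{p^{i_h+M+r}} = -\sum_{h}\frac{\upsilon_h}{p^{I_h-(RN-1)+r}} = p^{RN-1-r}v_1,$$
and analogously for the $t_2$-exponent, so $a_{M+r} = f_{p^{RN-1-r}v}$. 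Substituting into (\ref{e1}) at $n=M$ yields precisely
$$d_0 f_{p^{RN-1}v}^{p} + d_1 f_{p^{RN-2}v}^{p^{2}} + \cdots + d_{RN-1} f_{v}^{p^{RN}} = 0.$$

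The only non-bookkeeping step is the digit estimate of the second paragraph, which uses the fact that $\mathrm{T}_c \subset (-1,0]$ to convert the geometric hypothesis $p^{RN-1+M}v\in\mathrm{T}_c\times\mathrm{T}_c$ into the positivity of the shift indices $i_h, j_h$; I expect that to be the only subtle point, since once those indices are positive, the construction of $\{a_n\}$ is legal and everything else is an index match.
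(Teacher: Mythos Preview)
Your proof is correct and follows the same approach as the paper: represent $v$ via its $\mathrm{T}_c$-expansion, shift all exponents by $RN-1+M$ to build the sequence $\{a_n\}$, and then read off the desired identity from relation~(\ref{e1}) at $n=M$. The paper's argument is terser---it simply writes $p^{RN-1+M}v$ with exponents $i_h-RN+1-M$ and invokes the hypothesis without separately verifying positivity of these shifted indices---so your second paragraph makes explicit a point the paper leaves implicit in the assumption $p^{RN-1+M}v\in\mathrm{T}_c\times\mathrm{T}_c$.
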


 \begin{proof}
 Let say that $v:=(-\frac{\upsilon_{1}}{p^{i_{1}}}-\frac{\upsilon_{2}}{p^{i_{2}}}-\cdots-\frac{\upsilon_{e}}{p^{i_{e}}},\quad-\frac{\omega_{1}}{p^{j_{1}}}-\frac{\omega_{2}}{p^{j_{2}}}-\cdots- \frac{\omega_{k}}{p^{j_{k}}})$, therefore
 
$ p^{RN-1+M}v=(-\frac{\upsilon_{1}}{p^{i_{1}-RN+1-M}}-\cdots-\frac{\upsilon_{e}}{p^{i_{e}-RN+1-M}},\quad-\frac{\omega_{1}}{p^{j_{1}-RN+1-M}}-\cdots- \frac{\omega_{k}}{p^{j_{k}-RN+1-M}}).$
 
Consider the sequence $$ a_{n}= f_{(-\frac{\upsilon_{1}}{p^{i_{1}-RN+1-M+n}}-\cdots-\frac{\upsilon_{e}}{p^{i_{e}-RN+1-M+n}},\quad-\frac{\omega_{1}}{p^{j_{1}-RN+1-M+n}}-\cdots- \frac{\omega_{k}}{p^{j_{k}-RN+1-M+n}})}. $$ 
 We get that 
 
  $$ d_{0}f_{p^{RN-1}v}^{p}+d_{1}f_{p^{RN-2}v}^{p^{2}}+\cdots +d_{RN-1}f_{v}^{p^{RN}}= d_{0}a_{M}^{p}+d_{1}a_{M+1}^{p^{2}}+\cdots +d_{RN-1}a_{M+RN-1}^{p^{RN}}=0,$$  by hypothesis.  
 
 \end{proof}

 \section{Algebraic series}

  \begin{lema} \label{Afe} Let $ -\frac{\omega_{1}}{p^{j_{1}}}-\cdots-\frac{\omega_{k}}{p^{j_{k}}} $ be an element of $ \mathrm{T}_{c}. $
 Let  $$ f=\sum_{i} f_{(i,-\frac{\omega_{1}}{p^{j_{1}}}-\cdots-\frac{\omega_{k}}{p^{j_{k}}})}t_{1}^{i}t_{2}^{-\frac{\omega_{1}}{p^{j_{1}}}-\cdots-\frac{\omega_{k}}{p^{j_{k}}}} $$ be a series with support in $  \mathrm{T}_{c}\times  \mathrm{T}_{c}. $ Then $ f $ is algebraic over $ \mathbb{F}((t_{1},t_{2})),$ if and only if every sequence of the form  $ a_{n}=  f_{(-\frac{\upsilon_{1}}{p^{i_{1}}}-\cdots-\frac{\upsilon_{l-1}}{p^{i_{l-1}}}-\frac{1}{p^{n}}(\frac{\upsilon_{l}}{p^{i_{l}}}+\cdots+\frac{\upsilon_{e}}{p^{i_{e}}}),\quad -\frac{\omega_{1}}{p^{j_{1}}}-\cdots-\frac{\omega_{k}}{p^{j_{k}}})}$ is eventually periodic. 
  \end{lema}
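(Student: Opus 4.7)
The plan is to handle the two implications separately, relying on the tools assembled in Lemmas~\ref{L1}, \ref{even}, and \ref{eti}.

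\textbf{Necessity ($\Rightarrow$).} Assume $f$ is algebraic of degree $D$ over $\mathbb{F}((t_1,t_2))$. Then $f, f^p, \ldots, f^{p^D}$ are $D+1$ elements of the $D$-dimensional extension $\mathbb{F}((t_1,t_2))(f)$, hence linearly dependent. Clearing denominators yields a nontrivial relation
\[
\sum_{i=0}^{D} \varphi_i \, f^{p^i} = 0, \qquad \varphi_i \in \mathbb{F}[[t_1,t_2]].
\]
I would expand each $\varphi_i = \sum_{(n,m)} \varphi_{i,(n,m)} t_1^n t_2^m$ and extract the $(s,r)$-coefficient of this identity for $(s,r)$ of the form $(p^K x_0, y_0)$ with $K$ large, where $x_0$ is the initial exponent of the prescribed sequence. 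The integrality of the $(n,m) \in \mathrm{supp}(\varphi_i)$, combined with the $p$-adic fractional structure of $\mathrm{T}_c$, collapses the extracted identity to a Frobenius-linear recursion
\[
d_0 \, a_n^p + d_1 \, a_{n+1}^{p^2} + \cdots + d_{D-1} \, a_{n+D-1}^{p^D} = 0 \qquad (n \geq M)
\]
for some $d_i \in \mathbb{F}$. Lemma~\ref{L1}, applied to the pieces with support in $(-1,0]\times(-1,0]$, is the tool that guarantees the $d_i$ are not all zero. Lemma~\ref{even} then delivers eventual periodicity of $\{a_n\}$.

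\textbf{Sufficiency ($\Leftarrow$).} Now assume every such sequence is eventually periodic. The fixity of the second coordinate of $\mathrm{supp}(f)$ at $y_0$ and the algebraicity of $t_2^{y_0}$ over $\mathbb{F}((t_1,t_2))$ reduce the problem to showing that $g(t_1) := t_2^{-y_0} f$, with $\mathrm{supp}(g) \subset \mathrm{T}_c$, is algebraic over $\mathbb{F}((t_1))$. Decomposing the support of $g$ by $p$-shift orbits, each orbit-subseries of $g$ is a formal sum $\sum_n a_n t_1^{x_n}$ with $\{a_n\}$ eventually periodic; such a subseries is algebraic over $\mathbb{F}((t_1))$ via Abhyankar's factorization of $Z^p-Z-t_1^{-1}$ recalled in the introduction. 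The digit-sum constraint $\sum \upsilon_j \leq c$ confines the relevant orbit-shapes to finitely many patterns, so after collecting these contributions uniformly --- using Lemma~\ref{eti} in reverse to convert the common recursions satisfied by the coefficient sequences into polynomial relations satisfied by $g$ --- we conclude that $g$ lies in a finite algebraic extension of $\mathbb{F}((t_1))$.

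\textbf{Main obstacle.} The chief technical hurdle is the necessity direction: extracting a \emph{clean} Frobenius-linear recursion on a single distinguished sequence $\{a_n\}$ from the global identity $\sum \varphi_i f^{p^i} = 0$. Distinct triples $(i,(n,m),(a,b))$ may combine to yield the same monomial $t_1^s t_2^r$, and decoupling these contributions requires exploiting the integer supports of the $\varphi_i$ against the $p$-adic fractional supports of the $f^{p^i}$. Lemma~\ref{L1} is the key, converting $\mathbb{F}((t_1,t_2))$-dependences of pieces supported in $(-1,0]\times(-1,0]$ into $\mathbb{F}$-dependences, and thereby enabling the reduction to Lemma~\ref{even}.
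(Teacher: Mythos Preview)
Your plan takes a substantially harder road than the paper does, and in both directions it leaves real gaps. The paper's proof exploits the fact that the $t_2$-exponent of $f$ is the single fixed value $y_0=-\sum\omega_s/p^{j_s}$, so that both implications reduce to the one-variable problem and are then dispatched by a direct citation of Kedlaya's characterization \cite[Theorem~15]{K}. For sufficiency this is immediate after stripping off $t_2^{y_0}$. For necessity the only work is to upgrade ``$f':=t_2^{-y_0}f$ is algebraic over $\mathbb{F}((t_1,t_2))$'' to ``$f'$ is algebraic over $\mathbb{F}[[t_1]]$'': the paper normalizes the $\varphi_k$ so some have a $t_2$-free term, splits each as $\varphi_k^{(1)}+\varphi_k^{(2)}$ with $\varphi_k^{(1)}\in\mathbb{F}[[t_1]]$, and observes that $\sum\varphi_k^{(1)}(f')^{p^k}$ and $\sum\varphi_k^{(2)}(f')^{p^k}$ live in disjoint $t_2$-graded pieces, hence each vanishes. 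Lemmas~\ref{L1}, \ref{even}, \ref{eti} are not invoked at all.

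In your sufficiency argument, the hypothesis only says each sequence $\{a_n\}$ is eventually periodic, with no uniform bound on period or pre-period. Your orbit decomposition of $\mathrm{supp}(g)$ therefore produces infinitely many subseries: even after fixing a ``pattern'' $(\upsilon_1,\ldots,\upsilon_e)$, the fixed prefix $(i_1,\ldots,i_{l-1})$ ranges over infinitely many values. Passing from non-uniform eventual periodicity of all such sequences to algebraicity of $g$ is exactly the content of Kedlaya's one-variable theorem, and Lemma~\ref{eti} does not run ``in reverse'' to supply it. In your necessity argument, Lemma~\ref{L1} requires supports contained in $(-1,0]\times(-1,0]$, but $f^{p^i}$ has first-coordinate support in $p^i\mathrm{T}_c$, which already escapes $(-1,0]$ for $i\geq 1$; isolating pieces to which Lemma~\ref{L1} legitimately applies, and showing those pieces are still $\mathbb{F}((t_1,t_2))$-dependent, is precisely the obstacle you name but do not resolve. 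The paper sidesteps all of this with the $t_2$-degree splitting described above.
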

 \begin{proof} 
 Suppose that every sequence of the form $$a_{n}=  f_{(-\frac{\upsilon_{1}}{p^{i_{1}}}-\cdots-\frac{\upsilon_{l-1}}{p^{i_{l-1}}}-\frac{1}{p^{n}}(\frac{\upsilon_{l}}{p^{i_{l}}}+\cdots+\frac{\upsilon_{e}}{p^{i_{e}}}),\quad -\frac{\omega_{1}}{p^{j_{1}}}-\cdots-\frac{\omega_{k}}{p^{j_{k}}})} $$ is eventually periodic. Is enough to see that $  f^{\prime}:=\sum f_{(i,-\frac{\omega_{1}}{p^{j_{1}}}-\cdots-\frac{\omega_{k}}{p^{j_{k}}})}t_{1}^{i} $ is algebraic over $ \mathbb{F}((t_{1},t_{2})).$ Note that $ f^{\prime}\in \mathbb{F}((t_{1}^{\mathbb{Q},p})),$ so by \cite[Theorem 15]{K} $ f^{\prime} $ is algebraic over $  \mathbb{F}((t_{1})) $ and then over 
 $\mathbb{F}((t_{1},t_{2})).$\\
  Now suppose that $ f $ is algebraic over $ \mathbb{F}((t_{1},t_{2})).$ Then  $f^{\prime}$ is algebraic over $ \mathbb{F}((t_{1},t_{2})).$ So we can write,
  
  \begin{equation}\label{ebx}
\begin{array}{r@{\hspace{1 pt}} c@{\hspace{1 pt}}c@{\hspace{4pt}}l}
& \varphi_{0}f^{\prime}+\varphi_{1}f^{\prime p}+\cdots +\varphi_{l}f^{\prime p^{l}}=0,
\end{array}
\end{equation}
 for some $ l\in \mathbb{Z}_{\geq 0} $ and for some $ \varphi_{k}\in \mathbb{F}[[t_{1},t_{2}]], $ 
   $ \forall k=0,...,l. $
 Denote $$ m:=\mathrm{min} \lbrace \alpha_{2}\mid \exists \:\alpha_{1} \,\:\mathrm{with} \,\: (\alpha_{1},\alpha_{2})\in \mathrm{supp}(\varphi_{k})\,\: \mathrm{for}\,\: \mathrm{some} \,\: k\in\lbrace 0,...,l\rbrace\rbrace. $$

  Multiply by $t_{2}^{- m} $ both sides of (\ref{ebx}), we get
  
  \begin{equation}\label{a1}
\begin{array}{r@{\hspace{1 pt}} c@{\hspace{1 pt}}c@{\hspace{4pt}}l}
& \varphi_{0}^{\prime}f^{\prime}+\varphi_{1}^{\prime}f^{\prime p}+\cdots +\varphi_{l}^{\prime}f^{\prime p^{l}}=0,
\end{array}
\end{equation}
  
  where some of the $\varphi_{k}^{\prime} $ have some terms just depending on $ t_{1},$ that is, terms with support of the form 
  $ \lbrace (\alpha_{1},0) \rbrace.$ For $ k=0,...,l, $ we can write $$\varphi_{k}^{\prime}=\varphi_{k}^{(1)}+ \varphi_{k}^{(2)}$$   where $ \varphi_{k}^{(1)} $ just contains the terms of $  \varphi_{k}^{\prime} $ with support of the form $ \lbrace (\alpha_{1},0) \rbrace $ and $ \varphi_{k}^{(2)} $ contains the remaining terms.
  By (\ref{a1}), we can write,

  \begin{equation}\label{a2}
\begin{array}{r@{\hspace{1 pt}} c@{\hspace{1 pt}}c@{\hspace{4pt}}l}
& \varphi_{0}^{(1)}f^{\prime}+\cdots +\varphi_{l}^{(1)}f^{\prime p^{l}}=-\varphi_{0}^{(2)}f^{\prime}-\cdots -\varphi_{l}^{(2)}f^{\prime p^{l}}
\end{array}
\end{equation}

  This equality implies that $ \varphi_{0}^{(1)}f^{\prime}+\varphi_{1}^{(1)}f^{\prime p}+\cdots +\varphi_{l}^{(1)}f^{\prime p^{l}}=0, $ because the terms in the left side of (\ref{a2}) just depend on $ t_{1}.$ This means that $ f^{\prime} $ is algebraic over $ \mathbb{F}[[t_{1}]] $ then again by \cite[Theorem 15]{K}, we get that every sequence of the form $ a_{n}$ is eventually periodic. 
  
 \end{proof}

  \begin{lema} \label{AB} Let $ -\frac{\upsilon_{1}}{p^{i_{1}}}-\cdots-\frac{\upsilon_{e}}{p^{i_{e}}} $ be an element of $ \mathrm{T}_{c}. $
 Let  $$ f=\sum_{j} f_{(-\frac{\upsilon_{1}}{p^{i_{1}}}-\cdots-\frac{\upsilon_{e}}{p^{i_{e}}},j)}t_{1}^{-\frac{\upsilon_{1}}{p^{i_{1}}}-\cdots-\frac{\upsilon_{e}}{p^{i_{e}}}}t_{2}^{j} $$ be a series with support in $  \mathrm{T}_{c}\times  \mathrm{T}_{c}. $ Then $ f $ is algebraic over $ \mathbb{F}((t_{1},t_{2})),$ if and only if every sequence of the form $ a_{n}=  f_{(-\frac{\upsilon_{1}}{p^{i_{1}}}-\cdots-\frac{\upsilon_{e}}{p^{i_{e}}} ,\quad -\frac{\omega_{1}}{p^{j_{1}}}-\cdots-\frac{\omega_{r-1}}{p^{j_{r-1}}}-\frac{1}{p^{n}}(\frac{\omega_{r}}{p^{j_{r}}}+\cdots+\frac{\omega_{k}}{p^{j_{k}}}))}$ is eventually periodic. 
  \end{lema}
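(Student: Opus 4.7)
The statement is the mirror image of Lemma \ref{Afe}, obtained by swapping the roles of $t_{1}$ and $t_{2}$, so my plan is to follow that proof almost verbatim. Set $\alpha := -\frac{\upsilon_{1}}{p^{i_{1}}}-\cdots-\frac{\upsilon_{e}}{p^{i_{e}}}$ and define $f^{\prime} := \sum_{j} f_{(\alpha, j)} t_{2}^{j} \in \mathbb{F}((t_{2}^{\mathbb{Q},p}))$, so that $f = t_{1}^{\alpha} f^{\prime}$. Since $\alpha p^{N} \in \mathbb{Z}$ for $N = \max_{a} i_{a}$, the factor $t_{1}^{\alpha}$ satisfies $X^{p^{N}} - t_{1}^{\alpha p^{N}} = 0$ over $\mathbb{F}((t_{1}))$, hence is algebraic over $\mathbb{F}((t_{1},t_{2}))$. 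Consequently $f$ is algebraic over $\mathbb{F}((t_{1},t_{2}))$ if and only if $f^{\prime}$ is.

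For the sufficiency direction, if every sequence $a_{n}$ is eventually periodic, then \cite[Theorem 15]{K} applied to $f^{\prime} \in \mathbb{F}((t_{2}^{\mathbb{Q},p}))$ yields that $f^{\prime}$ is algebraic over $\mathbb{F}((t_{2})) \subseteq \mathbb{F}((t_{1},t_{2}))$, and the equivalence above then gives algebraicity of $f$.

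For the necessity direction, assume $f$ is algebraic, so that $f^{\prime}$ is as well, and fix a nontrivial relation
\[
\psi_{0} f^{\prime} + \psi_{1} f^{\prime p} + \cdots + \psi_{l} f^{\prime p^{l}} = 0
\]
with $\psi_{k} \in \mathbb{F}[[t_{1},t_{2}]]$. I would let $m$ be the minimum $t_{1}$-exponent appearing in the union of the supports of the $\psi_{k}$; multiplying through by $t_{1}^{-m}$ produces a relation with coefficients $\psi_{k}^{\prime} \in \mathbb{F}[[t_{1},t_{2}]]$, at least one of which has a monomial of $t_{1}$-exponent $0$. Splitting $\psi_{k}^{\prime} = \psi_{k}^{(1)} + \psi_{k}^{(2)}$ into its purely-$t_{2}$ part and the remainder, and using that $f^{\prime}$ has no $t_{1}$ dependence, the $t_{1}$-free piece of the relation must vanish on its own, producing a nonzero algebraic relation for $f^{\prime}$ over $\mathbb{F}[[t_{2}]]$. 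A final invocation of \cite[Theorem 15]{K} then forces eventual periodicity of every sequence of the prescribed form.

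I do not anticipate any serious obstacle: the argument is a direct transcription of the proof of Lemma \ref{Afe}. The only point that requires care is confirming that the purely-$t_{2}$ piece of the relation is nontrivial (so that one does not recover merely the tautology $0=0$), which is exactly why the minimum $t_{1}$-exponent of the $\psi_{k}^{\prime}$ is normalized to zero.
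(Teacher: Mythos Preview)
Your proposal is correct and is precisely the argument the paper intends: its proof of Lemma \ref{AB} reads in full ``Apply a similar argument as in the proof of Lemma \ref{Afe},'' and what you wrote is exactly that argument with the roles of $t_{1}$ and $t_{2}$ interchanged. The normalization step you flag (ensuring the purely-$t_{2}$ piece of the relation is nontrivial) is handled just as in Lemma \ref{Afe}, so there is no gap.
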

  
  \begin{proof}
  Apply a similar argument as in the proof of Lemma \ref{Afe}.
 \end{proof}

 \begin{theo}\label{t1}
 
  Let $ f=\sum_{i,j}f_{i,j}t_{1}^{i}t_{2}^{j}\in  \mathbb{F}((t_{1}^{\mathbb{Q}},t_{2}^{\mathbb{Q}})) $ be a series with support in $\mathrm{T}_{c}\times \mathrm{T}_{c}.$ Suppose that there exist positive integers $ M $ and $ N $ such that every sequence $ \lbrace a_{n}\rbrace_{n=0}^{\infty} $ of the form,
$$ a_{n}=f_{(-\frac{\upsilon_{1}}{p^{i_{1}}}-\cdots-\frac{\upsilon_{l-1}}{p^{i_{l-1}}}-\frac{1}{p^{n}}(\frac{\upsilon_{l}}{p^{i_{l}}}+\cdots+\frac{\upsilon_{e}}{p^{i_{e}}}),\quad-\frac{\omega_{1}}{p^{j_{1}}}-\cdots-\frac{\omega_{r-1}}{p^{j_{r-1}}}-\frac{1}{p^{n}}(\frac{\omega_{r}}{p^{j_{r}}}+\cdots+\frac{\omega_{k}}{p^{j_{k}}}))}$$ has period $ N $ after $ M $ terms and the sequences of the form 
$$ b_{n}=f_{(-\frac{\upsilon_{1}}{p^{i_{1}}}-\cdots-\frac{\upsilon_{e}}{p^{i_{e}}},\quad -\frac{\omega_{1}}{p^{j_{1}}}-\cdots-\frac{\omega_{r-1}}{p^{j_{r-1}}}-\frac{1}{p^{n}}(\frac{\omega_{r}}{p^{j_{r}}}+\cdots+\frac{\omega_{k}}{p^{j_{k}}}))}, $$ 
$$ c_{n}=f_{(-\frac{\upsilon_{1}}{p^{i_{1}}}-\cdots-\frac{\upsilon_{l-1}}{p^{i_{l-1}}}-\frac{1}{p^{n}}(\frac{\upsilon_{l}}{p^{i_{l}}}+\cdots+\frac{\upsilon_{e}}{p^{i_{e}}}),\quad -\frac{\omega_{1}}{p^{j_{1}}}-\cdots-\frac{\omega_{k}}{p^{j_{k}}})}$$ are eventually periodic. Then $ f $ is algebraic over 
$ \mathbb{F}((t_{1},t_{2})). $ 
 \end{theo}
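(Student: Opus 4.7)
The plan is to construct an algebraic element $g := f - f^{p^N}$ with strictly smaller support than $f$, and then close the argument by induction on the total digit-sum parameter, in the natural generalisation of the theorem to series supported in $\mathrm{T}_{c_1} \times \mathrm{T}_{c_2}$. Once $g$ is algebraic, the Artin--Schreier-type equation $X^{p^N} - X + g = 0$ exhibits $f$ as a root and yields the theorem.

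First, since $\mathbb{F} = \mathbb{F}_{p^s}$ is finite, replace $N$ by $\operatorname{lcm}(N, s)$; this is still a period of every diagonal sequence after $M$ terms, and ensures $x^{p^N} = x$ for every $x \in \mathbb{F}$, which in turn gives $f^{p^N} = f(t_1^{p^N}, t_2^{p^N})$. The periodicity $a_{n+N} = a_n$ for $n \ge M$ then rewrites as $a_n^p - a_{n+N}^{p^{N+1}} = 0$, which is exactly the recurrence treated by Lemma \ref{eti} with $R = 2$, $d_0 = 1$, $d_N = -1$ and other $d_k = 0$. Invoking Lemma \ref{eti} and taking a $p$-th root, the coefficient of $t_1^{z_1}t_2^{z_2}$ in $g$ vanishes for every $(z_1, z_2) \in \mathrm{T}_c \times \mathrm{T}_c$ whose $p$-adic digits in both coordinates lie at positions $> M$.

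Hence the support of $g$ inside $\mathrm{T}_c \times \mathrm{T}_c$ is contained in the shallow set $\mathcal{S}$ of exponents whose first or second coordinate has a nonzero digit at some position $\le M$. The digit-sum bound in $\mathrm{T}_c$ forces only finitely many possible heads (digit tuples at positions $\le M$) per coordinate, so decomposing each exponent in $\mathcal{S}$ as head-plus-tail gives a finite sum $g = \sum_{(w_1, w_2)} t_1^{w_1} t_2^{w_2} g_{w_1, w_2}$ over nonzero head pairs. After the substitution $t_i \mapsto t_i^{p^M}$ (which factors through the finite purely inseparable extension $\mathbb{F}((t_1, t_2)) \hookrightarrow \mathbb{F}((t_1^{1/p^M}, t_2^{1/p^M}))$ and therefore preserves algebraicity over $\mathbb{F}((t_1, t_2))$), each $g_{w_1, w_2}$ becomes a series $\tilde g_{w_1, w_2}$ supported in $\mathrm{T}_{c - d(w_1)} \times \mathrm{T}_{c - d(w_2)}$, where $d(w_i)$ denotes the digit-sum of $w_i$. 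One checks that $\tilde g_{w_1, w_2}$ inherits the hypotheses of the theorem: its diagonal sequences are again eventually periodic with the same uniform period $N$ (being $\mathbb{F}$-linear combinations of corresponding eventually periodic sequences of $f$), and its $b_n, c_n$ slice sequences are likewise eventually periodic; the limit cases where the support collapses to a single horizontal or vertical line are covered directly by Lemmas \ref{Afe} and \ref{AB}. Since $(w_1, w_2) \ne (0, 0)$ forces $d(w_1) + d(w_2) \ge 1$, the parameter $c_1 + c_2$ strictly decreases, so the induction (base case $c_1 = c_2 = 0$, where $f$ is a constant) gives algebraicity of each $\tilde g_{w_1, w_2}$ and hence of $g$. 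The possible support of $g$ outside $\mathrm{T}_c \times \mathrm{T}_c$, from the integer-part overflow of $f^{p^N}$, lies in finitely many translates and is handled by the same decomposition after multiplying by a suitable monomial. I expect the main technical obstacle to be the inherited-hypothesis verification — tracking carefully how the diagonal, horizontal and vertical periodicity conditions transform under the head--tail decomposition and the reparametrisation — but this ultimately reduces to the stability of eventual periodicity of $\mathbb{F}$-valued sequences under $\mathbb{F}$-linear operations.
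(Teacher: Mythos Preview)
Your approach is correct and genuinely different from the paper's. The paper builds the auxiliary series using \emph{inverse} Frobenius, setting
\[
g=\sum_{k=0}^{2rN-1}d_k\,f^{1/p^{\,2rN-1-k}},
\]
which keeps the support inside $\mathrm{T}_c\times\mathrm{T}_c$ automatically; it then recurses not on the digit-sum but on the \emph{number of variable indices}: at each stage the terms of $g$ that survive are sorted according to which exponent positions are forced to lie below a fixed bound $m_0$, and one repeats the construction on each such sub-series until only one variable index remains in each coordinate, where Lemmas~\ref{Afe} and \ref{AB} apply. Your route via $g=f-f^{p^{N}}$ and induction on $c_1+c_2$ is cleaner conceptually---the Artin--Schreier relation is immediate and the head/tail bookkeeping is more transparent---but you pay for the forward Frobenius with the ``overflow'' of $\operatorname{supp}(f^{p^{N}})$ outside $\mathrm{T}_c\times\mathrm{T}_c$; the paper's choice avoids that entirely. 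Your verification that each $\tilde g_{w_1,w_2}$ inherits the uniform-$(M,N)$ diagonal periodicity and the eventual periodicity of the slice sequences is the heart of the matter, and it goes through exactly as you indicate (each such sequence is an $\mathbb{F}$-linear combination of two sequences of the same type for $f$). Two small points worth tightening in a write-up: state and prove the theorem directly for $\mathrm{T}_{c_1}\times\mathrm{T}_{c_2}$ so the induction is honest, and spell out the overflow decomposition of $-f^{p^{N}}$ into finitely many monomial-times-$\mathrm{T}_{c_1'}\times\mathrm{T}_{c_2'}$ pieces with $c_1'+c_2'<2c$, checking the inherited hypotheses there as well (this works for the same reason, since each coefficient is again a single coefficient of $f$ at a rescaled exponent).
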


\begin{proof}Let $ p^{r}$ be the cardinality of $ \mathbb{F},$  where $ r\in \mathbb{Z}_{\geq 0}.$ There exist $ d_{0},...,d_{2rN-1}\in  \mathbb{F} $ not all zero such that every sequence $ a_{n} $ satisfies
 
  \begin{equation}\label{e2}
\begin{array}{r@{\hspace{1 pt}} c@{\hspace{1 pt}}c@{\hspace{4pt}}l}
  d_{0}a_{n}^{p}+d_{1}a_{n+1}^{p^{2}}+\cdots + d_{2rN-1}a_{n+2rN-1}^{p^{2rN}}=0
\end{array}
 \end{equation}
 
 for all $ n\geq M.$\\
  Indeed, note that 
  
  $  a_{n}^{p}+a_{n+1}^{p^{2}}+\cdots + a_{n+rN-1}^{p^{rN}}-a_{n+rN}^{p^{rN+1}}-\cdots -a_{n+2rN-1}^{p^{2rN}}=0.$ Thus $ d_{s}= 1$ for $ s=0,...,rN-1 $ and $ d_{s}=-1$ for $ s=rN,rN+1,...,2rN-1, $ is a solution.

 Consider one of theses solutions $ (d_{0},...,d_{2rN-1})$ and consider the series 
 
 $$ g:=d_{0}f^{1/p^{2rN-1}}+d_{1}f^{1/p^{2rN-2}}+\cdots+ d_{2rN-1}f. $$

 If $ g=0, $ $ f $ is algebraic. So we can assume that $ g\neq 0.$ We are going to show that $ g $ is a finite sum of algebraic series, from which it follows that $ g^{p^{2rN}} $ is algebraic and thus $ f. $

  Note that the coefficient $ g_{j^{\prime}} $ of $ g $ 
  is
 $$g_{j^{\prime}}=d_{0}f_{p^{2rN-1}j^{\prime}}^{1/p^{2rN-1}}+d_{1}f_{p^{2rN-2}j^{\prime}}^{1/p^{2rN-2}}+\cdots +d_{2rN-1}f_{j^{\prime}},$$
 where $ j^{\prime}\in \mathrm{supp}(f).$

Take
$$j^{\prime}= (-\frac{\upsilon_{1}}{p^{i_{1}^{\prime}}}-\frac{\upsilon_{2}}{p^{i_{2}^{\prime}}}-\cdots-\frac{\upsilon_{e}}{p^{i_{e}^{\prime}}},\quad-\frac{\omega_{1}}{p^{j_{1}^{\prime}}}-\frac{\omega_{2}}{p^{j_{2}^{\prime}}}-\cdots- \frac{\omega_{k}}{p^{j_{k}^{\prime}}})\in \mathrm{supp}(f).$$

With out loss of generality suppose that $ e\leq k.$ Note that there is $ m_{0}\in\mathbb{Z}_{> 0} $ such that $ \frac{b}{p^{n-2rN+1-M}}< \frac{1}{k} $ $ \forall n\geq m_{0} $ and for any $ b\in \lbrace 0,1,2,...,p-1\rbrace.$

 If $ j^{\prime}$  holds that $ i_{1}^{\prime},...,i_{e}^{\prime}\geq m_{0}$ and $ j_{1}^{\prime},...,j_{k}^{\prime}\geq m_{0} $ then $ p^{2rN-1+M}j^{\prime}\in \mathrm{T}_{c}\times \mathrm{T}_{c}$ because 
 
 $$\frac{\upsilon_{1}}{p^{i_{1}^{\prime}-2rN+1-M}}+\cdots+\frac{\upsilon_{e}}{p^{i_{e}^{\prime}-2rN+1-M}}< e\frac{1}{k}\leq 1\;\: \mathrm{and}$$ 
 
  $$ \frac{\omega_{1}}{p^{j_{1}^{\prime}-2rN+1-M}}+\cdots+ \frac{\omega_{k}}{p^{j_{k}^{\prime}-2rN+1-M}}<1.$$

  Thus by Lemma \ref{eti} we can conclude that $ g_{j^{\prime}}^{p^{2rN}}=0 $ and then $ g_{j^{\prime}}=0.$
 We can sort out the remaining terms of $ g $ whose supports use $\lbrace \upsilon_{1},...,\upsilon_{e} \rbrace $ in the first coordinate and $ \lbrace \omega_{1},...,\omega_{k} \rbrace $ in the second coordinate, in series with support of the following forms
  
 \begin{description}
 \item[i)]Series with support of the form $(-\frac{\upsilon_{1}}{p^{i_{1}}}-\cdots-\frac{\upsilon_{e}}{p^{i_{e}}},\quad-\frac{\omega_{1}}{p^{j_{1}}}-\cdots- \frac{\omega_{k}}{p^{j_{k}}}),$ where some of the indices $ i_{s} $ for $ s\in \lbrace 1,...,e\rbrace$ are variable indices (these variable indices are in $\mathbb{Z}_{>0}$) and all the remaining indices are constant and these constant indices satisfy that belongs to the set $  \lbrace 1,...,m_{0}-1\rbrace.$ 
  
 \item[ii)] Series with support of the form $(-\frac{\upsilon_{1}}{p^{i_{1}}}-\cdots-\frac{\upsilon_{e}}{p^{i_{e}}},\quad-\frac{\omega_{1}}{p^{j_{1}}}-\cdots- \frac{\omega_{k}}{p^{j_{k}}}),$ where some of the $ j_{t} $ for $ t\in\lbrace 1,...,k\rbrace$ are variable indices and all the remaining indices are constant and they satisfy that belongs to the set $ \lbrace 1,...,m_{0}-1\rbrace.$
 \item[iii)] Series with support of the form $(-\frac{\upsilon_{1}}{p^{i_{1}}}-\cdots-\frac{\upsilon_{e}}{p^{i_{e}}},\quad-\frac{\omega_{1}}{p^{j_{1}}}-\cdots- \frac{\omega_{k}}{p^{j_{k}}})$  where some of the indices $ i_{s} $ for $ s\in \lbrace 1,...,e\rbrace$ are constant and they satisfy that $ i_{s}\in \lbrace 1,...,m_{0}-1\rbrace$ and where some of the $ j_{t} $ for $ t\in\lbrace 1,...,k\rbrace$ are constant and they satisfy that $ j_{t}\in \lbrace 1,...,m_{0}-1\rbrace$ and the remaining indices are variable indices.
 \end{description}

 Note that there are many finitely series with support as $\mathrm{\textbf{i}}),\mathrm{\textbf{ii}}) $ and 
 $ \mathrm{\textbf{iii}}).$ We are going to show that these series are algebraic.\\

  Take one of this series, let say $ f^{(1)}.$ Suppose that the series $ f^{(1)} $ has some of the indices $ i_{s}$ as variable indices and some of the $ j_{t} $ also as variable indices. Without loss of generality suppose that $ i_{1}^{\prime},...,i_{l}^{\prime} $ and $ j_{1}^{\prime},...,j_{m}^{\prime}$ are the constant indices. Note that it is enough to show that the following series is algebraic,
 $$ \sum g_{(-\frac{\upsilon_{1}}{p^{i_{1}^{\prime}}}-\cdots-\frac{\upsilon_{e}}{p^{i_{e}}},\quad-\frac{\omega_{1}}{p^{j_{1}^{\prime}}}-\cdots- \frac{\omega_{k}}{p^{j_{k}}})}t_{1}^{-\frac{\upsilon_{l+1}}{p^{i_{l+1}}}-\cdots-\frac{\upsilon_{e}}{p^{i_{e}}}}t_{2}^{-\frac{\omega_{m+1}}{p^{j_{m+1}}}-\cdots- \frac{\omega_{k}}{p^{j_{k}}}},$$ where the sum is running over all $ i_{l+1},...,i_{e},j_{m+1},...,j_{k}\in \mathbb{Z}_{> 0} $ such that the $ 2$-tuple

 $$ (-\frac{\upsilon_{1}}{p^{i_{1}^{\prime}}}-\cdots -\frac{\upsilon_{l}}{p^{i_{l}^{\prime}}}-\frac{\upsilon_{l+1}}{p^{i_{l+1}}}-\cdots -\frac{\upsilon_{e}}{p^{i_{e}}},-\frac{\omega_{1}}{p^{j_{1}^{\prime}}}-\cdots -\frac{\omega_{m}}{p^{j_{m}^{\prime}}}-\frac{\omega_{m+1}}{p^{j_{m+1}}}-\cdots -\frac{\omega_{k}}{p^{j_{k}}})\in \mathrm{supp}(g).$$
 
  We denote this series again by $ f^{(1)}.$ Note that sequences of the form
 
 $ e_{n}=g_{(-\frac{\upsilon_{1}}{p^{i_{1}^{\prime}}}-\cdots-\frac{\upsilon_{l}}{p^{i_{l}^{\prime}}}-\frac{1}{p^{n}}(-\frac{\upsilon_{l+1}}{p^{i_{l+1}^{\prime}}}-\cdots-\frac{\upsilon_{e}}{p^{i_{e}^{\prime}}}),\;-\frac{\omega_{1}}{p^{j_{1}^{\prime}}}-\cdots- \frac{\omega_{m}}{p^{j_{m}^{\prime}}}-\frac{1}{p^{n}}(-\frac{\omega_{m+1}}{p^{j_{m+1}^{\prime}}}-\cdots- \frac{\omega_{k}}{p^{j_{k}^{\prime}}})) }$ are eventually periodic because $ e_{n} $ is a sum of sequences of period $ N $ after $ M $ terms, explicitly we have
 $$ e_{n}=d_{2rN-1}f_{(-\frac{\upsilon_{1}}{p^{i_{1}^{\prime}}}-\cdots-\frac{\upsilon_{l}}{p^{i_{l}^{\prime}}}-\frac{1}{p^{n}}(-\frac{\upsilon_{l+1}}{p^{i_{l+1}^{\prime}}}-\cdots-\frac{\upsilon_{e}}{p^{i_{e}^{\prime}}}),\;-\frac{\omega_{1}}{p^{j_{1}^{\prime}}}-\cdots- \frac{\omega_{m}}{p^{j_{m}^{\prime}}}-\frac{1}{p^{n}}(-\frac{\omega_{m+1}}{p^{j_{m+1}^{\prime}}}-\cdots- \frac{\omega_{k}}{p^{j_{k}^{\prime}}})) }+$$
 
    $$d_{2rN-2}f^{1/p}_{(-\frac{\upsilon_{1}}{p^{i_{1}^{\prime}-1}}-\cdots-\frac{\upsilon_{l}}{p^{i_{l}^{\prime}-1}}-\frac{1}{p^{n-1}}(-\frac{\upsilon_{l+1}}{p^{i_{l+1}^{\prime}}}-\cdots-\frac{\upsilon_{e}}{p^{i_{e}^{\prime}}}),\;-\frac{\omega_{1}}{p^{j_{1}^{\prime}-1}}-\cdots- \frac{\omega_{m}}{p^{j_{m}^{\prime}-1}}-\frac{1}{p^{n-1}}(-\frac{\omega_{m+1}}{p^{j_{m+1}^{\prime}}}-\cdots- \frac{\omega_{k}}{p^{j_{k}^{\prime}}})) }+  $$
   
     $$\cdots+d_{0}f^{1/p^{2rN-1}}_{(-\frac{\upsilon_{1}}{p^{i_{1}^{\prime}-(2rN-1)}}-\cdots-\frac{\upsilon_{l}}{p^{i_{l}^{\prime}-(2rN-1)}}-\frac{1}{p^{n-(2rN-1)}}(-\frac{\upsilon_{l+1}}{p^{i_{l+1}^{\prime}}}-\cdots-\frac{\upsilon_{e}}{p^{i_{e}^{\prime}}}),\;\ast),}$$   
  where $ \ast $  is  
 $ -\frac{\omega_{1}}{p^{j_{1}^{\prime}-(2rN-1)}}-\cdots- \frac{\omega_{m}}{p^{j_{m}^{\prime}-(2rN-1)}}-\frac{1}{p^{n-(2rN-1)}}(-\frac{\omega_{m+1}}{p^{j_{m+1}^{\prime}}}-\cdots- \frac{\omega_{k}}{p^{j_{k}^{\prime}}}) $

 At this point, we can repeat all the above argument for $ f^{(1)} $ instead of $ f $ and $ e_{n} $ instead of $ a_{n}.$
 That is, we can find  $ d_{0}^{(1)},...,d_{2rN-1}^{(1)}\in  \mathbb{F}$ not all zero such that 
 
  \begin{equation}\label{e1}
\begin{array}{r@{\hspace{1 pt}} c@{\hspace{1 pt}}c@{\hspace{4pt}}l}
  d_{0}^{(1)}e_{n}^{p}+d_{1}^{(1)}e_{n+1}^{p^{2}}+\cdots + d_{2rN-1}^{(1)}e_{n+2rN-1}^{p^{2rN}}=0
\end{array}
 \end{equation}
 for all $ n\geq M.$ 
  Consider the series $$ g^{(1)}:=d_{0}^{(1)}f^{(1)1/p^{2rN-1}}+d_{1}^{(1)}f^{(1)1/p^{2rN-2}}+\cdots+ d_{2rN-1}^{(1)}f^{(1)}. $$
  If $ g^{(1)}=0 $ then $ f^{(1)} $ is algebraic. So we can assume that $ g^{(1)}\neq 0. $
  
  We are going to show that $ g^{(1)} $ is algebraic, from this it will follows that $ f^{(1)} $ is algebraic.
 Reasoning analogously as before we can see now that $ g^{(1)} $ is a finite sum of series such that the support of these series hold that at least one of the following is true: the number of variable indices $ i_{s} $ is less than $ e-l $ or the number of variable indices $ j_{t} $ is less than $ k-m. $ Take one of this series, let say $ f^{(2)},$  
 
  $$ f^{(2)}=h(t_{1},t_{2}) \sum g_{(-\frac{\upsilon_{1}}{p^{i_{1}^{\prime}}}-\cdots-\frac{\upsilon_{e}}{p^{i_{e}}},\quad-\frac{\omega_{1}}{p^{j_{1}^{\prime}}}-\cdots- \frac{\omega_{k}}{p^{j_{k}}})}t_{1}^{-\frac{\upsilon_{l+a}}{p^{i_{1+a}}}-\cdots-\frac{\upsilon_{e}}{p^{i_{e}}}}t_{2}^{-\frac{\omega_{m+b}}{p^{j_{m+b}}}-\cdots- \frac{\omega_{k}}{p^{j_{k}}}} ,$$ where $ a>1 $ or $ b>1, $ and the sum is running over all 
$ i_{l+a},...,i_{e},j_{m+b},...,j_{k}\in\mathbb{Z}_{>0}$ and $$h(t_{1},t_{2})= t_{1}^{-\frac{\upsilon_{1}}{p^{i_{1}^{\prime}}}-\cdots-\frac{\upsilon_{l+a-1}}{p^{i_{l+a-1}^{\prime}}}}t_{2}^{-\frac{\omega_{1}}{p^{j_{1}^{\prime}}}-\cdots- \frac{\omega_{m+b-1}}{p^{j_{m+b-1}^{\prime}}}}. $$

 Reasoning as before we have that sequences of the form
 
 $ e_{n}^{\prime}=g_{(-\frac{\upsilon_{1}}{p^{i_{1}^{\prime}}}-\cdots-\frac{\upsilon_{l+a-1}}{p^{i_{l+a-1}^{\prime}}}-\frac{1}{p^{n}}(\frac{\upsilon_{1+a}}{p^{i_{l+a}^{\prime}}}+\cdots+\frac{\upsilon_{e}}{p^{i_{e}^{\prime}}}),\;-\frac{\omega_{1}}{p^{j_{1}^{\prime}}}-\cdots- \frac{\omega_{m+b-1}}{p^{j_{m+b-1}^{\prime}}}-\frac{1}{p^{n}}(\frac{\omega_{m+b}}{p^{j_{m+b}^{\prime}}}-\cdots- \frac{\omega_{k}}{p^{j_{k}^{\prime}}})) }$ are eventually periodic.
 Now we can repeat the above argument recursively for $ f^{(2)},$ we will show that 
 
 $$ g^{(2)}:=d_{0}^{(2)}f^{(2)1/p^{2rN-1}}+d_{1}^{(2)}f^{(2)1/p^{2rN-2}}+\cdots+ d_{2rN-1}^{(2)}f^{(2)}$$
 
 is algebraic.
 In general, we get

  $$ g^{(n)}:=d_{0}^{(n)}f^{(n)1/p^{2rN-1}}+d_{1}^{(n)}f^{(n)1/p^{2rN-2}}+\cdots+ d_{2rN-1}^{(n)}f^{(n)},$$
 
 where $  g^{(n)} $ is a finite sum of series and the support of these series holds that at least one of the following is true: the number of variable indices $ i_{s} $ is less than the number of variable indices of the exponents of $ f^{(n)} $ in the first coordinate or the number of variable indices $ j_{t} $ is less than the number of variable indices of the exponents of $ f^{(n)} $ in the second coordinate.
So eventually, in many finitely steps we will get  sequences of the following forms

 $$ f^{(n_{0})}=h(t_{1},t_{2})\sum g_{(-\frac{\upsilon_{1}}{p^{i_{1}^{\prime}}}-\cdots-\frac{\upsilon_{e}}{p^{i_{e}}},\quad-\frac{\omega_{1}}{p^{j_{1}^{\prime}}}-\cdots- \frac{\omega_{k}}{p^{j_{k}}})}t_{1}^{\frac{-\upsilon_{e}}{p^{i_{e}}}}t_{2}^{\frac{-\omega_{k}}{p^{i_{k}}}},$$
 
  where the sum is running over all $ i_{e},j_{k}\in\mathbb{Z}_{>0},$
  and $$  h(t_{1},t_{2})= t_{1}^{-\frac{\upsilon_{1}}{p^{i_{1}^{\prime}}}-\cdots-\frac{\upsilon_{e-1}}{p^{i_{e-1}^{\prime}}}}t_{2}^{-\frac{\omega_{1}}{p^{j_{1}^{\prime}}}-\cdots- \frac{\omega_{k-1}}{p^{j_{k-1}^{\prime}}}}, $$ because on each step the number of variable indices is strictly decreasing. So as before we can get a series

   $$ g^{(n_{0})}:=d_{0}^{(n_{0})}f^{(n_{0})1/p^{2rN-1}}+d_{1}^{(n_{0})}f^{(n_{0})1/p^{2rN-2}}+\cdots+ d_{2rN-1}^{(n_{0})}f^{(n_{0})}. $$ The series $ g^{(n_{0})} $ is a sum of a finite number of series of the following forms
   
   $$ \sum g_{(-\frac{\upsilon_{1}}{p^{i_{1}^{\prime}}}-\cdots-\frac{\upsilon_{e}}{p^{i}},\quad-\frac{\omega_{1}}{p^{j_{1}^{\prime}}}-\cdots- \frac{\omega_{k}}{p^{j_{k}}})}t_{1}^{\frac{-\upsilon_{e}}{p^{i}}}t_{2}^{\frac{-\omega_{k}}{p^{i_{k}}}},$$
 where $ i $ is a constant, or $$ \sum g_{(-\frac{\upsilon_{1}}{p^{i_{1}^{\prime}}}-\cdots-\frac{\upsilon_{e}}{p^{i_{e}}},\quad-\frac{\omega_{1}}{p^{j_{1}^{\prime}}}-\cdots- \frac{\omega_{k}}{p^{j}})}t_{1}^{\frac{-\upsilon_{e}}{p^{i_{e}}}}t_{2}^{\frac{-\omega_{k}}{p^{j}}},$$ where $ j $ is a constant.
 We can apply the hypothesis that sequences as $ (b_{n}) $ and $ (c_{n}) $ are eventually periodic and Lemma \ref{Afe} or Lemma \ref{AB} to conclude that these series are algebraic over $ \mathbb{F}((t_{1},t_{2})).$ It follows that series with support as $ \mathrm{\textbf{i}}),\mathrm{\textbf{ii}}) $ and $ \mathrm{\textbf{iii}}) $ are algebraic over $ \mathbb{F}((t_{1},t_{2})).$  Since there are many finitely elections of the $ \upsilon_{i}^{,} $s and $ \omega_{i}^{,} $s such that $ \sum \upsilon_{i}\leq c$ and $ \sum \omega_{j}\leq c,$ it follows that $ g $ is a finite sum of algebraic series.
Thus $ f $ is algebraic over $ \mathbb{F}((t_{1},t_{2})).$ 
\end{proof}

 Let $ A $ be a ring. Let $ \Gamma $ be an abelian totally ordered group and let $ \infty $ be an element such that $ x< \infty $ for every $ x $ in $ \Gamma $. Extend the law on $ \Gamma\cup \lbrace \infty\rbrace $ by $ \infty+x=\infty+\infty=\infty. $
 A map

  \begin{equation}\label{e22}
\begin{array}{r@{\hspace{1 pt}} c@{\hspace{1 pt}}c@{\hspace{4pt}}l}
 \nu:A\longrightarrow \Gamma \cup \lbrace\infty \rbrace
\end{array}
 \end{equation}
 
 is said to be a valuation of $A $ if satisfies the following properties for all  $ x,y\in A:$
 
\begin{description}
\item[i)]$ \nu(x\cdot y)=\nu(x)+\nu(y),$ 
\item[ii)]$ \nu(x+ y)\geq \mathrm{min}(\nu(x),\nu(y)).$
\item[iii)]$ \nu(x)=\infty $ if and only if $ x=0.$
\end{description}

\begin{rem}\cite[\emph{Remark} 1.3]{Va}\label{Re1} If we have a mapping $ \nu:A\longrightarrow \Gamma \cup \lbrace\infty \rbrace $ with conditions $\mathrm{\textbf{i}}),\mathrm{\textbf{ii}}) $ and with $ \nu(0)=\infty, $ but if we do$\mathrm{n^{,}}$t assume that $ \nu $ takes the value $ \infty $ only for $ 0, $ the set 
 $ \mathcal{P}=\nu^{-1}(\infty)$ is an ideal prime of $ A $ and $ \nu $ induces a valuation on the integral domain $ A/\mathcal{P}. $
\end{rem}


\begin{prop} The second property of a valuation $ \nu $ can be generalized for any set $ \lbrace x_{1},...,x_{n}\rbrace $ in $A$ by 
$\nu(\sum_{i=1}^{n}x_{i}) \geq \mathrm{min}(\nu(x_{1}),...,\nu(x_{n})).$  If the minimum is reached by only one of the
$ \nu(x_{i}) $ we get the equality: $\nu(\sum_{i=1}^{n}x_{i}) = \mathrm{min}(\nu(x_{1}),...,\nu(x_{n}))$.
\end{prop}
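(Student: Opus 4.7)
The plan is to prove both assertions by induction on $n$, with the main work being the equality statement when the minimum is uniquely achieved.

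For the inequality $\nu\bigl(\sum_{i=1}^{n}x_{i}\bigr)\geq\min(\nu(x_{1}),\ldots,\nu(x_{n}))$, I would induct on $n$. The base case $n=2$ is property \textbf{ii)}. For the inductive step, I write $\sum_{i=1}^{n}x_{i}=\bigl(\sum_{i=1}^{n-1}x_{i}\bigr)+x_{n}$, apply \textbf{ii)} to the two summands, and then use the induction hypothesis on $\nu\bigl(\sum_{i=1}^{n-1}x_{i}\bigr)\geq\min(\nu(x_{1}),\ldots,\nu(x_{n-1}))$ to conclude. This part is routine once one observes that $\min$ of a larger collection is no bigger.

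Before addressing the equality case, I need the auxiliary fact $\nu(-x)=\nu(x)$ for all $x\in A$. From $\nu(1)=\nu(1\cdot 1)=\nu(1)+\nu(1)$ one gets $\nu(1)=0$ (using that $\nu(1)\in\Gamma$, since $1\neq 0$ and property \textbf{iii)} forbids the value $\infty$). Then $0=\nu(1)=\nu((-1)(-1))=2\nu(-1)$ forces $\nu(-1)=0$ in the totally ordered group $\Gamma$, and property \textbf{i)} gives $\nu(-x)=\nu(-1)+\nu(x)=\nu(x)$.

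For the equality statement, suppose the minimum is attained uniquely at index $k$, i.e.\ $\nu(x_{k})<\nu(x_{i})$ for every $i\neq k$. The inequality already established yields $\nu\bigl(\sum_{i=1}^{n}x_{i}\bigr)\geq\nu(x_{k})$, so I need the reverse. Rewriting
\[ x_{k}=\Bigl(\sum_{i=1}^{n}x_{i}\Bigr)+\Bigl(-\sum_{i\neq k}x_{i}\Bigr), \]
I apply \textbf{ii)} to get $\nu(x_{k})\geq\min\bigl(\nu(\sum_{i}x_{i}),\,\nu(-\sum_{i\neq k}x_{i})\bigr)$. Using $\nu(-y)=\nu(y)$ and the already-proven inequality for the $(n-1)$-term sum, $\nu(-\sum_{i\neq k}x_{i})\geq\min_{i\neq k}\nu(x_{i})>\nu(x_{k})$ by the uniqueness assumption. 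Therefore the minimum on the right must be $\nu(\sum_{i}x_{i})$, giving $\nu(x_{k})\geq\nu(\sum_{i}x_{i})$, and combined with the opposite inequality this produces the claimed equality.

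The only mildly delicate point is the manipulation of strict inequalities in $\Gamma\cup\{\infty\}$: I must ensure the comparison $\nu(-\sum_{i\neq k}x_{i})>\nu(x_{k})$ is really strict, which uses the hypothesis that the minimum is attained by \emph{only one} of the $\nu(x_{i})$, together with the fact that a minimum of finitely many elements all strictly greater than $\nu(x_{k})$ is itself strictly greater than $\nu(x_{k})$ in a totally ordered set. Everything else is a straightforward application of the axioms.
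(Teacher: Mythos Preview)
Your proof is correct and follows the standard argument. The paper does not actually prove this proposition; it simply cites \cite[Proposition 1.3]{Va}, so there is no in-paper argument to compare against---your induction together with the auxiliary fact $\nu(-x)=\nu(x)$ is exactly the expected proof one would find in such a reference.
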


\begin{proof}
See for example \cite[\emph{Proposition} 1.3]{Va}
\end{proof}


\begin{theo}\label{u}
 
 Suppose that $ \upsilon_{1},...,\upsilon_{a},$ and $ \omega_{1},...,\omega_{b}$ belong to $ \sum_{p}.$ Let

 $$ f=\sum f_{(-\frac{\upsilon_{1}}{p^{i_{1}}}-\cdots-\frac{\upsilon_{a}}{p^{i_{a}}},-\frac{\omega_{1}}{p^{j_{1}}}-\cdots-\frac{\omega_{b}}{p^{j_{b}}})}t_{1}^{-\frac{\upsilon_{1}}{p^{i_{1}}}-\cdots-\frac{\upsilon_{a}}{p^{i_{a}}}}t_{2}^{-\frac{\omega_{1}}{p^{j_{1}}}-\cdots-\frac{\omega_{b}}{p^{j_{b}}}}$$ 
 
 be a series with support in  $ T_{_{\sum_{i=1}^{a} \nu_{i}}}\times T_{_{\sum_{j=1}^{b} \omega_{j}}}$ and suppose that $ f $ is algebraic over 
$ \mathbb{F}((t_{1},t_{2})).$ Then sequences of the form,

$$ a_{n}=f_{(-\frac{\upsilon_{1}}{p^{i_{1}^{\prime}}}-\cdots-\frac{\upsilon_{l-1}}{p^{i_{l-1}^{\prime}}}-\frac{1}{p^{n}}(\frac{\upsilon_{l}}{p^{i_{l}^{\prime}}}+\cdots+\frac{\upsilon_{a}}{p^{i_{a}^{\prime}}}),\quad-\frac{\omega_{1}}{p^{j_{1}^{\prime}}}-\cdots- \frac{\omega_{r-1}}{p^{j_{r-1}^{\prime}}}-\frac{1}{p^{n}}(\frac{\omega_{r}}{p^{j_{r}^{\prime}}}+\cdots+\frac{\omega_{b}}{p^{j_{b}^{\prime}}}))}$$

$$ b_{n}=f_{(-\frac{\upsilon_{1}}{p^{i_{1}^{\prime}}}-\cdots-\frac{\upsilon_{l-1}}{p^{i_{l-1}^{\prime}}}-\frac{1}{p^{n}}(\frac{\upsilon_{l}}{p^{i_{l}^{\prime}}}+\cdots+\frac{\upsilon_{a}}{p^{i_{a}^{\prime}}}),\quad-\frac{\omega_{1}}{p^{j_{1}^{\prime}}}-\cdots- \frac{\omega_{b}}{p^{j_{b}^{\prime}}})}   $$

and

$$ c_{n}=f_{(-\frac{\upsilon_{1}}{p^{i_{1}^{\prime}}}-\cdots-\frac{\upsilon_{a}}{p^{i_{a}^{\prime}}},\quad-\frac{\omega_{1}}{p^{j_{1}^{\prime}}}-\cdots- \frac{\omega_{r-1}}{p^{j_{r-1}^{\prime}}}-\frac{1}{p^{n}}(\frac{\omega_{r}}{p^{j_{r}^{\prime}}}+\cdots+\frac{\omega_{b}}{p^{j_{b}^{\prime}}}))}, $$

are eventually periodic.  
\end{theo}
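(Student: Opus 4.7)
The approach is to invert Theorem \ref{t1}: starting from the algebraicity of $f$, I will extract a $p$-linear relation, then from it a Frobenius-type recursion on each coefficient sequence, and finally apply Lemma \ref{even} to conclude periodicity. First, since $f$ is algebraic over $K=\mathbb{F}((t_1,t_2))$ and the characteristic is $p$, the powers $\{f^{p^k}\}_{k\ge 0}$ span a finite-dimensional subspace of $K(f)$ and are therefore $K$-linearly dependent. Clearing denominators yields an identity
$$\sum_{i=0}^{d}\varphi_i\,f^{p^i}\;=\;\psi,$$
with $\varphi_i,\psi\in\mathbb{F}[[t_1,t_2]]$ and the $\varphi_i$ not all zero. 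Because $\mathrm{supp}(\psi)\subset\mathbb{Z}_{\ge 0}^{2}$, this $\psi$ does not contribute to any coefficient whose position has a strictly negative coordinate, so it will be harmless throughout.

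Next, fix the parameters of one of the three sequences, say $\{a_n\}$, and set $(\alpha_n,\beta_n)$ as in the statement. Extracting the coefficient of $t_1^{\alpha_n}t_2^{\beta_n}$ from the identity gives
$$\sum_{i=0}^{d}\sum_{(s,t)\in \mathrm{supp}(\varphi_i)}\varphi_{i,(s,t)}\,f_{((\alpha_n-s)/p^i,\,(\beta_n-t)/p^i)}^{p^i}=0.$$
The key step is the \emph{diagonal reduction}: for $n$ sufficiently large, only the terms with $(s,t)=(0,0)$ survive. This is the algebraic-direction analogue of Lemma \ref{eti}. The support of $f$ is rigidly constrained to positions whose coordinates are $p$-adic expansions involving exactly the prescribed numerators $\upsilon_1,\dots,\upsilon_a$ (resp.\ $\omega_1,\dots,\omega_b$); once $n$ exceeds the $p$-adic width of $\mathrm{supp}(\varphi_i)$, a nonzero integer shift $s$ or $t$ produces a $p$-adic expansion of $(\alpha_n-s)/p^i$ that cannot be written using only the prescribed $\upsilon_k$'s, forcing the coefficient of $f$ at that position to vanish.

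After this reduction, the identity becomes $\sum_{i=0}^{d}\varphi_{i,(0,0)}f_{(\alpha_n/p^i,\beta_n/p^i)}^{p^i}=0$ for all large $n$. A direct computation shows $f_{(\alpha_n/p^i,\beta_n/p^i)}=A^{(i)}_{n+i}$, where $A^{(i)}$ is the sequence of the same type with every fixed index shifted up by $i$. The resulting relation
$$\sum_{i=0}^{d}\varphi_{i,(0,0)}\bigl(A^{(i)}_{n+i}\bigr)^{p^i}=0$$
holds uniformly as the fixed parameters vary. Since $\mathbb{F}$ is finite and this relation determines each state in terms of finitely many previous states drawn from a finite parameter family, an argument parallel to the proof of Lemma \ref{even}, applied to the joint sequence, forces $\{a_n\}$ to be eventually periodic. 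The sequences $\{b_n\}$ and $\{c_n\}$ are handled by the same extraction at positions where only one coordinate varies; because the fixed side no longer participates in the shift, the relations collapse to genuine self-recursions on the respective sequences, and Lemma \ref{even} applies directly.

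The main obstacle is the diagonal-reduction step: one must precisely analyze how each integer shift $(s,t)$ interacts with the rigid $p$-adic patterns imposed on $\mathrm{supp}(f)$ by the prescribed $\upsilon_k$'s and $\omega_k$'s, and determine a threshold beyond which no nonzero shift can yield a position still in $\mathrm{supp}(f)$. A secondary subtlety in the $\{a_n\}$ case is that the extracted recursion entangles $\{a_n\}$ with its shifted cousins $\{A^{(i)}_n\}$; extracting pure periodicity of $\{a_n\}$ requires treating the finite family of reachable parameter tuples jointly, via a pigeonhole argument on $\mathbb{F}$-valued tuples analogous to Lemma \ref{even}.
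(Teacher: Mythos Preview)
Your extraction has two gaps that prevent it from reaching Lemma~\ref{even}.

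First, the constants you obtain are $c_i=\varphi_{i,(0,0)}$, and nothing guarantees that these are not all zero: the $\varphi_i\in\mathbb{F}[[t_1,t_2]]$ are only known to be not all zero as power series. If every $\varphi_i$ lies in the maximal ideal, your relation is vacuous. The paper handles this with Lemma~\ref{L1}: once one knows that the relevant tails $G_0,G_1^{p},\dots,G_d^{p^d}$ (which have support in $(-1,0]\times(-1,0]$) are $\mathbb{F}[[t_1,t_2]]$-linearly dependent, Lemma~\ref{L1} upgrades this to an $\mathbb{F}$-linear dependence with genuinely nonzero constants.

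Second, and more seriously, the relation you extract is $\sum_i c_i\,(A^{(i)}_{n+i})^{p^i}=0$, where $A^{(i)}$ has every \emph{fixed} index $i_1',\dots,i_{l-1}',j_1',\dots,j_{r-1}'$ shifted up by $i$. Iterating this for each $A^{(k)}$ gives $\sum_i c_i\,(A^{(k+i)}_{n+i})^{p^i}=0$, which is a recursion along the diagonals $m\mapsto A^{(k+m)}_{n+m}$ of the doubly-indexed array $\{A^{(k)}_n\}_{k,n\ge 0}$, not along the row $n\mapsto A^{(0)}_n=a_n$. Your proposed pigeonhole on ``reachable parameter tuples'' fails because the family $\{A^{(k)}\}_{k\ge 0}$ is infinite: each application of the relation pushes the fixed indices further. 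The paper avoids the shifted-cousins problem altogether by first multiplying $f$ by the monomial $t_1^{\sum_{s<l}\upsilon_s/p^{i_s'}}t_2^{\sum_{s<r}\omega_s/p^{j_s'}}$ to form $g$; this cancels the fixed part of the exponent, so that in the subseries $\bar g=\sum_n a_n t_1^{-\sum_{s\ge l}\upsilon_s/p^{i_s'+n}}t_2^{-\sum_{s\ge r}\omega_s/p^{j_s'+n}}$ the $p^t$-th power only moves the running index $n$. The lengthy case analysis in the paper is exactly what is needed to separate $\bar g$ from $h=g-\bar g$ inside $\sum\varphi_t g^{p^t}=0$, after which one gets a recursion on the single sequence $a_n$ and Lemma~\ref{even} applies.

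Your treatment of $b_n$ and $c_n$ has the same defect: dividing the fully fixed second coordinate $\beta=-\sum\omega_s/p^{j_s'}$ by $p^i$ shifts all the $j_s'$ by $i$, so you again land on a different sequence, not on $b_{n+i}$. The paper instead uses an initial-form (valuation) argument with respect to $\omega=(0,1)$ (resp.\ $(1,0)$) to show that each one-variable slice $\sum_i f_{(i,\beta)}t_1^{i}$ is algebraic over $\mathbb{F}((t_1))$, and then invokes Lemma~\ref{Afe} (resp.\ Lemma~\ref{AB}), which rests on Kedlaya's one-variable criterion.
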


\begin{proof}

Consider a sequence $$ a_{n}=f_{(-\frac{\upsilon_{1}}{p^{i_{1}^{\prime}}}-\cdots-\frac{\upsilon_{l-1}}{p^{i_{l-1}^{\prime}}}-\frac{1}{p^{n}}(\frac{\upsilon_{l}}{p^{i_{l}^{\prime}}}+\cdots+\frac{\upsilon_{a}}{p^{i_{a}^{\prime}}}),\quad-\frac{\omega_{1}}{p^{j_{1}^{\prime}}}-\cdots- \frac{\omega_{r-1}}{p^{j_{r-1}^{\prime}}}-\frac{1}{p^{n}}(\frac{\omega_{r}}{p^{j_{r}^{\prime}}}+\cdots+\frac{\omega_{b}}{p^{j_{b}^{\prime}}}))}.$$ Since $ f $ is algebraic over $ \mathbb{F}((t_{1},t_{2})), $ so is $$ g:=t_{1}^{\frac{\upsilon_{1}}{p^{i_{1}^{\prime}}}+\cdots+\frac{\upsilon_{l-1}}{p^{i_{l-1}^{\prime}}}}t_{2}^{\frac{\omega_{1}}{p^{j_{1}^{\prime}}}+\cdots+\frac{\omega_{r-1}}{p^{j_{r-1}^{\prime}}}}f.$$ Since $ g $ is algebraic over $ \mathbb{F}((t_{1},t_{2})), $ the extension $ \mathbb{F}((t_{1},t_{2}))(g)/\mathbb{F}((t_{1},t_{2})) $ is finite. So we can choose any linear dependence among $ g, g^{p}, g^{p^{2}},...,g^{p^{m}},...,.$ Thus there are $ d\in\mathbb{Z}_{>0} $ and $ \varphi_{0},\varphi_{1},...,\varphi_{d}\in \mathbb{F}((t_{1},t_{2})) $ not all zero such that $$ \varphi_{0}g+\varphi_{1}g^{p}+\cdots +\varphi_{d}g^{p^{d}}=0.$$ By clearing denominators, we may suppose that $  \varphi_{0},\varphi_{1},...,\varphi_{d}\in \mathbb{F}[[t_{1},t_{2}]]$.\\

 Now consider the following subseries of $ g:$ 
 
 $$ \overline{g}:= \sum_{n=1}^{\infty} a_{n}t_{1}^{-\frac{\upsilon_{l}}{p^{i_{l}^{\prime}+n}}-\cdots-\frac{\upsilon_{a}}{p^{i_{a}^{\prime}+n}}}t_{2}^{-\frac{\omega_{r}}{p^{j_{r}^{\prime}+n}}-\cdots- \frac{\omega_{b}}{p^{j_{b}^{\prime}+n}}}. $$ 
 
 Let $ h:=g-\overline{g}.$ We have,
$$0=\varphi_{0}g+\varphi_{1}g^{p}+\cdots +\varphi_{d}g^{p^{d}}=\varphi_{0}\overline{g}+\varphi_{1}\overline{g}^{p}+\cdots +\varphi_{d}\overline{g}^{p^{d}}+\varphi_{0}h+\varphi_{1}h^{p}+\cdots +\varphi_{d}h^{p^{d}}.$$
 
 Let $ \psi:=\varphi_{0}\overline{g}+\varphi_{1}\overline{g}^{p}+\cdots +\varphi_{d}\overline{g}^{p^{d}} $ and $ \psi^{\prime}:=\varphi_{0}h+\varphi_{1}h^{p}+\cdots +\varphi_{d}h^{p^{d}}. $ \\

  The exponents of the terms from $ \varphi_{t}\overline{g}^{p^{t}} $ for $ t\in\lbrace 0,1,...,d\rbrace$ are of the form

   \begin{equation}\label{f}
 \begin{array}{r@{\hspace{1 pt}} c@{\hspace{1 pt}}c@{\hspace{4pt}}l}
  (m_{1}-\frac{\upsilon_{l}}{p^{i_{l}^{\prime}+n-t}}-\cdots-\frac{\upsilon_{a}}{p^{i_{a}^{\prime}+n-t}},\quad m_{2}-\frac{\omega_{r}}{p^{j_{r}^{\prime}+n-t}}-\cdots- \frac{\omega_{b}}{p^{j_{b}^{\prime}+n-t}})
 \end{array}
  \end{equation}

  Fix $ t\in\lbrace 0,1,...,d\rbrace.$ Note that there is $ k_{t} $ such that\\
 
  $ 0<\frac{\upsilon_{l}}{p^{i_{l}^{\prime}+n-t}}+\cdots +\frac{\upsilon_{a}}{p^{i_{a}^{\prime}+n-t}}<1  $
and $ 0<\frac{\omega_{r}}{p^{j_{r}^{\prime}+n-t}}+\cdots +\frac{\omega_{b}}{p^{j_{b}^{\prime}+n-t}}<1, $ for all $ n\geq k_{t}. $
 
 We are going to show that the terms from  $ \varphi_{t}\overline{g}^{p^{t}} $ whose exponents satisfy in their expressions (\ref{f}) that $ n\geq k_{t},$ cannot cancel with the terms from $ \varphi_{t}h^{p^{t}}.$

Consider a point in the support of $ \varphi_{t}h^{p^{t}}.$ The first coordinate of this point is of the form  
$$ l_{1}+\frac{\upsilon_{1}}{p^{i_{1}^{\prime}-t}}+\cdots+\frac{\upsilon_{l-1}}{p^{i_{l-1}^{\prime}-t}}-\frac{\upsilon_{1}}{p^{i_{1}-t}}-\cdots-\frac{\upsilon_{a}}{p^{i_{a}-t}} $$
 and the second coordinate is of the form

 $$  l_{2}+\frac{\omega_{1}}{p^{j_{1}^{\prime}-t}}+\cdots+\frac{\omega_{r-1}}{p^{j_{r-1}^{\prime}-t}}-\frac{\omega_{1}}{p^{j_{1}-t}}-\cdots- \frac{\omega_{b}}{p^{j_{b}-t}}.$$
 We will refer to this point as the point $ \mathbf{A}.$  
 
 
 Here the $i_{1}^{\prime},...,i_{l-1}^{\prime}, i_{1},...,i_{a} $ and $j_{1}^{\prime},...,j_{r-1}^{\prime}, j_{1},...,j_{b} $  that appears in $ (3.5) $ satisfy that

 $$(\frac{\upsilon_{1}}{p^{i_{1}^{\prime}}}+\cdots+\frac{\upsilon_{l-1}}{p^{i_{l-1}^{\prime}}}-\frac{\upsilon_{1}}{p^{i_{1}}}-\cdots-\frac{\upsilon_{a}}{p^{i_{a}}},\quad \frac{\omega_{1}}{p^{j_{1}^{\prime}}}+\cdots+\frac{\omega_{r-1}}{p^{j_{r-1}^{\prime}}}-\frac{\omega_{1}}{p^{j_{1}}}-\cdots- \frac{\omega_{b}}{p^{j_{b}}})\in \mathrm{supp}(h).$$  
 
  Fix a point as (\ref{f}) with $ n\geq k_{t} $ (we will refer to this point as the point $ \mathbf{B}$) and the point $ \mathbf{A}.$ Suppose that these points are equal.


   \begin{description}

    \item[i)]
  
 Suppose that $ (3.5) $ holds that $i_{1}^{\prime},...,i_{l-1}^{\prime}, i_{1},\ldots, i_{a}> t  $ and  $ j_{1}^{\prime},...,j_{r-1}^{\prime},j_{1},...,j_{b}> t.$
 
 \begin{itemize}
 \item Suppose that at least one of the following is true: 
 not any numbers $ \frac{\upsilon_{1}}{p^{i_{1}^{\prime}-t}},\cdots,\frac{\upsilon_{l-1}}{p^{i_{l-1}^{\prime}-t}} $ at all are sumands in $ \frac{\upsilon_{1}}{p^{i_{1}-t}}+\cdots+\frac{\upsilon_{a}}{p^{i_{a}-t}} $, or not any numbers $ \frac{\omega_{1}}{p^{j_{1}^{\prime}-t}},\cdots,\frac{\omega_{r-1}}{p^{j_{r-1}^{\prime}-t}}$ at all are sumands in 
 $ \frac{\omega_{1}}{p^{j_{1}-t}}+\cdots+ \frac{\omega_{b}}{p^{j_{b}-t}}.$
 
Without loss of generality suppose that we are in the first case. Note that $$  -1<  \frac{\upsilon_{1}}{p^{i_{1}^{\prime}-t}}+\cdots+\frac{\upsilon_{l-1}}{p^{i_{l-1}^{\prime}-t}}-\frac{\upsilon_{1}}{p^{i_{1}-t}}-\cdots-\frac{\upsilon_{a}}{p^{i_{a}-t}}<1 $$ and $$ -1< \frac{\omega_{1}}{p^{j_{1}^{\prime}-t}}+\cdots+\frac{\omega_{r-1}}{p^{j_{r-1}^{\prime}-t}}-\frac{\omega_{1}}{p^{j_{1}-t}}-\cdots- \frac{\omega_{b}}{p^{j_{b}-t}}<1.$$

 Clearly if $$ \frac{\upsilon_{1}}{p^{i_{1}^{\prime}-t}}+\cdots+\frac{\upsilon_{l-1}}{p^{i_{l-1}^{\prime}-t}}-\frac{\upsilon_{1}}{p^{i_{1}-t}}-\cdots-\frac{\upsilon_{a}}{p^{i_{a}-t}}=0 $$ or $$\frac{\omega_{1}}{p^{j_{1}^{\prime}-t}}+\cdots+\frac{\omega_{r-1}}{p^{j_{r-1}^{\prime}-t}}-\frac{\omega_{1}}{p^{j_{1}-t}}-\cdots- \frac{\omega_{b}}{p^{j_{b}-t}}=0,$$ the points 
 $ \mathbf{A}$ and $ \mathbf{B}$ cannot be equal.\\
 

  If $ \frac{\upsilon_{1}}{p^{i_{1}^{\prime}-t}}+\cdots+\frac{\upsilon_{l-1}}{p^{i_{l-1}^{\prime}-t}}-\frac{\upsilon_{1}}{p^{i_{1}-t}}-\cdots-\frac{\upsilon_{a}}{p^{i_{a}-t}}<0, $
  the equality of the first coordinate of $ \mathbf{A}$ and $ \mathbf{B}$ implies that
 
 

 $$ \frac{\upsilon_{1}}{p^{i_{1}-t}}+\cdots+\frac{\upsilon_{a}}{p^{i_{a}-t}}-\frac{\upsilon_{1}}{p^{i_{1}^{\prime}-t}}-\cdots-\frac{\upsilon_{l-1}}{p^{i_{l-1}^{\prime}-t}}=\frac{\upsilon_{l}}{p^{i_{l}^{\prime}+n-t}}+\cdots+\frac{\upsilon_{a}}{p^{i_{a}^{\prime}+n-t}}$$ and then 
  
  $$ \frac{\upsilon_{1}}{p^{i_{1}-t}}+\cdots+\frac{\upsilon_{a}}{p^{i_{a}-t}}= \frac{\upsilon_{1}}{p^{i_{1}^{\prime}-t}}+\cdots+\frac{\upsilon_{l-1}}{p^{i_{l-1}^{\prime}-t}}+\frac{\upsilon_{l}}{p^{i_{l}^{\prime}+n-t}}+\cdots+\frac{\upsilon_{a}}{p^{i_{a}^{\prime}+n-t}}.$$ Thus we obtain a contradiction.\\

  If $ \frac{\upsilon_{1}}{p^{i_{1}^{\prime}-t}}+\cdots+\frac{\upsilon_{l-1}}{p^{i_{l-1}^{\prime}-t}}-\frac{\upsilon_{1}}{p^{i_{1}-t}}-\cdots-\frac{\upsilon_{a}}{p^{i_{a}-t}}>0, $ we get that 
  
  $$ \frac{\upsilon_{1}}{p^{i_{1}^{\prime}-t}}+\cdots+\frac{\upsilon_{l-1}}{p^{i_{l-1}^{\prime}-t}}-\frac{\upsilon_{1}}{p^{i_{1}-t}}-\cdots-\frac{\upsilon_{a}}{p^{i_{a}-t}}+\frac{\upsilon_{l}}{p^{i_{l}^{\prime}+n-t}}+\cdots+\frac{\upsilon_{a}}{p^{i_{a}^{\prime}+n-t}}=1.$$ Therefore,
  
  $$  \frac{\upsilon_{1}}{p^{i_{1}^{\prime}-t}}+\cdots+\frac{\upsilon_{l-1}}{p^{i_{l-1}^{\prime}-t}}+\frac{\upsilon_{l}}{p^{i_{l}^{\prime}+n-t}}+\cdots+\frac{\upsilon_{a}}{p^{i_{a}^{\prime}+n-t}}=1+\frac{\upsilon_{1}}{p^{i_{1}-t}}+\cdots+\frac{\upsilon_{a}}{p^{i_{a}-t}},$$ a contradiction.

\item Suppose that  $ \frac{\upsilon_{1}}{p^{i_{1}^{\prime}-t}},\cdots,\frac{\upsilon_{l-1}}{p^{i_{l-1}^{\prime}-t}} $ appear as sumands in $ \frac{\upsilon_{1}}{p^{i_{1}-t}}+\cdots+\frac{\upsilon_{a}}{p^{i_{a}-t}} $, and $ \frac{\omega_{1}}{p^{j_{1}^{\prime}-t}},\cdots,\frac{\omega_{r-1}}{p^{j_{r-1}^{\prime}-t}}$  appear as sumands in $ \frac{\omega_{1}}{p^{j_{1}-t}}+\cdots+ \frac{\omega_{b}}{p^{j_{b}-t}}.$
  
  \end{itemize}
  
  Then $$ -\frac{\upsilon_{l}}{p^{i_{l}^{\prime}+n-t}}-\cdots-\frac{\upsilon_{a}}{p^{i_{a}^{\prime}+n-t}}=-\frac{\upsilon_{l}}{p^{i_{l}-t}}-\cdots-\frac{\upsilon_{a}}{p^{i_{a}-t}} $$

 and 
  
  $$ -\frac{\omega_{r}}{p^{j_{r}^{\prime}+n-t}}-\cdots- \frac{\omega_{b}}{p^{j_{b}^{\prime}+n-t}}=-\frac{\omega_{r}}{p^{j_{r}-t}}-\cdots- \frac{\omega_{b}}{p^{j_{b}-t}}.$$

  Then we get 
 $ i_{l}=i_{l}^{\prime}+n,....,i_{a}=i_{a}^{\prime}+n$ and $ j_{r}=j_{r}^{\prime}+n,...,j_{b}=j_{b}^{\prime}+n.$
 
 That is, we get a contradiction because the term with exponent $$  (\frac{\upsilon_{1}}{p^{i_{1}^{\prime}}}+\cdots+\frac{\upsilon_{l-1}}{p^{i_{l-1}^{\prime}}}-\frac{\upsilon_{1}}{p^{i_{1}}}-\cdots-\frac{\upsilon_{a}}{p^{i_{a}}},\quad \frac{\omega_{1}}{p^{j_{1}^{\prime}}}+\cdots+\frac{\omega_{r-1}}{p^{j_{r-1}^{\prime}}}-\frac{\omega_{1}}{p^{j_{1}}}-\cdots- \frac{\omega_{b}}{p^{j_{b}}})$$ belongs to $ h$ and not to $ \overline{g}.$

 It follows that the terms from  $ \varphi_{t}\overline{g}^{p^{t}} $ whose exponents satisfy in their expressions (\ref{f}) that $ n\geq k_{t},$ cannot cancel with the terms from $ \varphi_{t}h^{p^{t}}$ for which the exponents satisfy that 
 $i_{1}^{\prime},...,i_{l-1}^{\prime}, i_{1},\ldots, i_{a}> t  $ and  $ j_{1}^{\prime},...,j_{r-1}^{\prime},j_{1},...,j_{b}> t.$

 \item[ii)]  Now suppose that at least one of the $i_{1}^{\prime},...,i_{l-1}^{\prime}, i_{1},\ldots, i_{a}$
 is less than $ t $  and  $ j_{1}^{\prime},...,j_{r-1}^{\prime},j_{1},...,j_{b}> t.$ 
 
 Without loss of generality suppose that $ i_{1}^{\prime},...,i_{k}^{\prime}< t,$ $ i_{k+1 }^{\prime},...,i_{l-1}^{\prime}, i_{1},\ldots, i_{a}>t $ and  $ j_{1}^{\prime},...,j_{r-1}^{\prime},j_{1},...,j_{b}> t.$ 
 
Now we can apply a similar argument as in the previous case replacing the set formed by $i_{1}^{\prime},...,i_{l-1}^{\prime},$  $ i_{1},\ldots, i_{a} $ by the set formed by  $ i_{k+1 }^{\prime},...,i_{l-1}^{\prime}, i_{1},\ldots, i_{a} $ to get a contradiction.


 \item[iii)] For the case where at least one of the $ j_{1}^{\prime},...,j_{r-1}^{\prime},j_{1},...,j_{b}$ is less than $ t $ 
  and $i_{1}^{\prime},...,i_{l-1}^{\prime}, i_{1},\ldots, i_{a}$ are greater than $ t $ and the case where  at least one of the
   $i_{1}^{\prime},...,i_{l-1}^{\prime}, i_{1},\ldots, i_{a}$ is less than $ t $ and  at least one of the $ j_{1}^{\prime},...,j_{r-1}^{\prime},j_{1},...,j_{b}$ is less than $ t $ we can apply a similar argument as in the previous cases to obtain a contradiction.
  
 \end{description}
Thus the terms from  $ \varphi_{t}\overline{g}^{p^{t}} $ whose exponents satisfy in their expressions (\ref{f}) that $ n\geq k_{t},$ cannot cancel with the terms from $ \varphi_{t}h^{p^{t}}$.\\

  For any $ t\in\lbrace 0,1,...,d\rbrace, $ let $ n_{t}:=t+\mathrm{max}\lbrace k_{t^{\prime}}-t^{\prime}\mid t^{\prime}\in \lbrace 0,...,d\rbrace\rbrace.$ Here $k_{t^{\prime}}  $ is defined as above and we can suppose that $ k_{t^{\prime}}>d. $ We define  $$ G_{t}:= \sum_{n=n_{t}}^{\infty} a_{n}t_{1}^{-\frac{\upsilon_{l}}{p^{i_{l}^{\prime}+n}}-\cdots-\frac{\upsilon_{a}}{p^{i_{a}^{\prime}+n}}}t_{2}^{-\frac{\omega_{r}}{p^{j_{r}^{\prime}+n}}-\cdots- \frac{\omega_{b}}{p^{j_{b}^{\prime}+n}}}.$$ 
  
 Note that for any $ t\in\lbrace 0,1,...,d\rbrace,$ we can write
 
 $$ \varphi_{t}\overline{g}^{p^{t}}= \varphi_{t}a_{1}^{p^{t}}t_{1}^{-\frac{\upsilon_{l}}{p^{i_{l}^{\prime}-(t-1)}}-\cdots-\frac{\upsilon_{a}}{p^{i_{a}^{\prime}-(t-1)}}}t_{2}^{-\frac{\omega_{r}}{p^{j_{r}^{\prime}-(t-1)}}-\cdots- \frac{\omega_{b}}{p^{j_{b}^{\prime}-(t-1)}}}+\cdots +$$

$$+ \varphi_{t}a_{n_{t}-1}^{p^{t}}t_{1}^{-\frac{\upsilon_{l}}{p^{i_{l}^{\prime}-(t-n_{t}+1)}}-\cdots-\frac{\upsilon_{a}}{p^{i_{a}^{\prime}-(t-n_{t}+1)}}}t_{2}^{-\frac{\omega_{r}}{p^{j_{r}^{\prime}-(t-n_{t}+1)}}-\cdots- \frac{\omega_{b}}{p^{j_{b}^{\prime}-(t-n_{t}+1)}}}+ \varphi_{t}G_{t}^{p^{t}}.$$

 Let us write  $$ \psi^{\prime \prime} :=\varphi_{0}G_{0}+ \varphi_{1}G_{1}^{p}+\cdots +\varphi_{d}G_{d}^{p^{d}}.$$  
  We are going to see that the terms from $ \psi^{\prime \prime} $ cannot cancel with the terms from  $ \psi^{\prime}.$

 Note that the terms from $  \varphi_{t}G_{t}^{p^{t}} $ are part of the terms of $  \varphi_{t}\overline{g}^{p^{t}} $ whose exponents hold that $ n\geq k_{t}.$ It follows that the terms from 
  $\varphi_{t}G_{t}^{p^{t}}$ cannot cancel with the terms from $\varphi_{t}h^{p^{t}}.$
  Since the exponents of the terms from $ \varphi_{t}G_{t}^{p^{t}} $ have all the same form for any $ t\in\lbrace 0,...,d\rbrace,$ that is, they are of the form  $$ (m_{1}-\frac{\upsilon_{l}}{p^{i_{l}^{\prime}+m}}-\cdots-\frac{\upsilon_{a}}{p^{i_{a}^{\prime}+m}},\, m_{2}-\frac{\omega_{r}}{p^{j_{r}^{\prime}+m}}-\cdots- \frac{\omega_{b}}{p^{j_{b}^{\prime}+m}}) $$ where $ m\geq \mathrm{max}\lbrace k_{t^{\prime}}-t^{\prime}\mid t^{\prime}\in \lbrace 0,...,d\rbrace\rbrace, $ we obtain that the terms from $ \varphi_{t}G_{t}^{p^{t}} $ cannot cancel with the terms from $ \varphi_{t^{\prime}}h^{p^{t^{\prime}}} $ for any  $ t^{\prime}\in\lbrace 0,...,d\rbrace.$ It follows that the terms from $ \psi^{\prime \prime} $ cannot cancel with the terms from $ \psi^{\prime}. $

We can write $$ 0=\psi^{\prime}+\psi^{\prime \prime}+\varphi_{d}a_{1}^{p^{d}} t_{1}^{-\frac{\upsilon_{l}}{p^{i_{l}^{\prime}-(d-1)}}-\cdots-\frac{\upsilon_{a}}{p^{i_{a}^{\prime}-(d-1)}}}t_{2}^{-\frac{\omega_{r}}{p^{j_{r}^{\prime}-(d-1)}}-\cdots- \frac{\omega_{b}}{p^{j_{b}^{\prime}-(d-1)}}}+ $$ 
 $$+\cdots +[\varphi_{d}a_{d-1}^{p^{d}}+\cdots+ \varphi_{2}a_{1}^{p^{2}} ]t_{1}^{-\frac{\upsilon_{l}}{p^{i_{l}^{\prime}-1}}-\cdots-\frac{\upsilon_{a}}{p^{i_{a}^{\prime}-1}}}t_{2}^{-\frac{\omega_{r}}{p^{j_{r}^{\prime}-1}}-\cdots- \frac{\omega_{b}}{p^{j_{b}^{\prime}-1}}}+$$
 
 $$+[\varphi_{d}a_{d}^{p^{d}}+\cdots+ \varphi_{2}a_{2}^{p^{2}}+\varphi_{1}a_{1}^{p} ]t_{1}^{-\frac{\upsilon_{l}}{p^{i_{l}^{\prime}}}-\cdots-\frac{\upsilon_{a}}{p^{i_{a}^{\prime}}}}t_{2}^{-\frac{\omega_{r}}{p^{j_{r}^{\prime}}}-\cdots- \frac{\omega_{b}}{p^{j_{b}^{\prime}}}}+  $$
 
 $$ +[\varphi_{d}a_{d+1}^{p^{d}}+\cdots+ \varphi_{2}a_{3}^{p^{2}}+\varphi_{1}a_{2}^{p} +\varphi_{0}a_{1}]t_{1}^{-\frac{\upsilon_{l}}{p^{i_{l}^{\prime}+1}}-\cdots-\frac{\upsilon_{a}}{p^{i_{a}^{\prime}+1}}}t_{2}^{-\frac{\omega_{r}}{p^{j_{r}^{\prime}+1}}-\cdots- \frac{\omega_{b}}{p^{j_{b}^{\prime}+1}}}+  $$

 $$ +\cdots +[\varphi_{d}a_{n_{d}-1}^{p^{d}}+\cdots +\varphi_{1}a_{n_{1}-1}^{p}+\varphi_{0}a_{n_{0}-1}] t_{1}^{-\frac{\upsilon_{l}}{p^{i_{l}^{\prime}+d_{0}-1}}-\cdots-\frac{\upsilon_{a}}{p^{i_{a}^{\prime}+d_{0}-1}}}t_{2}^{-\frac{\omega_{r}}{p^{j_{r}^{\prime}+d_{0}-1}}-\cdots- \frac{\omega_{b}}{p^{j_{b}^{\prime}+d_{0}-1}}},$$
 
 where $ d_{0}:=\mathrm{max}\lbrace k_{t^{\prime}}-t^{\prime}\mid t^{\prime}\in \lbrace 0,...,d\rbrace\rbrace$.\\

 We are going to show that terms from $ \psi^{\prime \prime} $ cannot cancel with terms from $ -\psi^{\prime}-  \psi^{\prime \prime}:$

 suppose that $$(m_{1}-\frac{\upsilon_{l}}{p^{i_{l}^{\prime}+m}}-\cdots-\frac{\upsilon_{a}}{p^{i_{a}^{\prime}+m}},\, m_{2}-\frac{\omega_{r}}{p^{j_{r}^{\prime}+m}}-\cdots- \frac{\omega_{b}}{p^{j_{b}^{\prime}+m}}),$$ where $ m\geq d_{0} $ is equal to 
 
 $$(l_{1}-\frac{\upsilon_{l}}{p^{i_{l}^{\prime}+n}}-\cdots-\frac{\upsilon_{a}}{p^{i_{a}^{\prime}+n}},l_{2}-\frac{\omega_{r}}{p^{j_{r}^{\prime}+n}}-\cdots- \frac{\omega_{b}}{p^{j_{b}^{\prime}+n}}),$$ where $  -(d-1)\leq n\leq  d_{0}-1.$ If some of the $ i_{l}^{\prime}+n,..., i_{a}^{\prime}+n$ is $ \leq 0 $ clearly we get a contradiction. Similarly we get a contradiction, if some of the $ j_{r}^{\prime}+n,....,j_{b}^{\prime}+n $ is $ \leq 0.$ 
 
In other case we get that $ m=n\leq d_{0}-1,$ a contradiction. It follows that $\psi^{\prime \prime} =0. $ Since   $G_{0}, G_{1}^{p},...,G_{d}^{p^{d}} $ have support in $ (-1,0]\times (-1,0],$ by Lemma \ref{L1} there are $ c_{0},...,c_{n}\in \mathbb{F} $ not all zero such that $$ c_{0}G_{0}+ c_{1}G_{1}^{p}+\cdots +c_{d}G_{d}^{p^{d}}=0.$$
  
  That is,

  $$  \sum_{n=n_{0}}^{\infty}c_{0} a_{n}t_{1}^{-\frac{\upsilon_{l}}{p^{i_{l}^{\prime}+n}}-\cdots-\frac{\upsilon_{a}}{p^{i_{a}^{\prime}+n}}}t_{2}^{-\frac{\omega_{r}}{p^{j_{r}^{\prime}+n}}-\cdots- \frac{\omega_{b}}{p^{j_{b}^{\prime}+n}}}+ $$  $$+\sum_{n=n_{1}}^{\infty}c_{1} a_{n}^{p}t_{1}^{-\frac{\upsilon_{l}}{p^{i_{l}^{\prime}+n-1}}-\cdots-\frac{\upsilon_{a}}{p^{i_{a}^{\prime}+n-1}}}t_{2}^{-\frac{\omega_{r}}{p^{j_{r}^{\prime}+n-1}}-\cdots- \frac{\omega_{b}}{p^{j_{b}^{\prime}+n-1}}}+$$ 
  
  $$+\cdots + \sum_{n=n_{d}}^{\infty}c_{d} a_{n}^{p^{d}}t_{1}^{-\frac{\upsilon_{l}}{p^{i_{l}^{\prime}+n-d}}-\cdots-\frac{\upsilon_{a}}{p^{i_{a}^{\prime}+n-d}}}t_{2}^{-\frac{\omega_{r}}{p^{j_{r}^{\prime}+n-d}}-\cdots- \frac{\omega_{b}}{p^{j_{b}^{\prime}+n-d}}}=0.  $$

 Note that the coefficient of  $ t_{1}^{-\frac{\upsilon_{l}}{p^{i_{l}^{\prime}+n}}-\cdots-\frac{\upsilon_{a}}{p^{i_{a}^{\prime}+n}}}t_{2}^{-\frac{\omega_{r}}{p^{j_{r}^{\prime}+n}}-\cdots- \frac{\omega_{b}}{p^{j_{b}^{\prime}+n}}} $ for $ n\geq d_{0} $ is

  $$ c_{0}a_{n}+c_{1}a_{n+1}^{p}+\cdots + c_{d}a_{n+d}^{p^{d}}=0.$$
  
  Thus, by Lemma \ref{even} the sequence $ a_{n} $ becomes eventually periodic.\\

  We are going to consider the cases of sequences of the form $ b_{n} $ and $ c_{n}.$  Consider the vector $ \omega=(0,1)$  and the mapping $ \upsilon_{\omega}: \mathbb{F}((t_{1}^{\mathbb{Q}},t_{2}^{\mathbb{Q}}))\rightarrow \mathbb{Q} \cup\lbrace \infty\rbrace $ defined by

  $$  \upsilon_{\omega}(h):= 
   \begin{cases} 
      \mathrm{min}\lbrace \alpha\cdot\omega=\alpha_{2} \mid \alpha=(\alpha_{1},\alpha_{2})\in \mathrm{supp}(h)\rbrace            & \mbox{if the minimum exits,}\, h\neq 0  \\
      \infty  & \mbox{other case } 
   \end{cases}$$  
  
   For $ h\in\mathbb{F}((t_{1}^{\mathbb{Q}},t_{2}^{\mathbb{Q}}))\setminus \upsilon_{\omega}^{-1}(\infty),$ we will denote by $ \mathrm{In}_{\omega}(h),$ the sum of the terms of $ h$ such that the exponents of these terms have second coordinate equal to $  \upsilon_{\omega}(h)$.\\

   Consider the valuation induced by $ \upsilon_{\omega}$ (Remark \ref{Re1}), we will denote it by $ \nu_{\omega}$. Since $ f $ is algebraic over the ring of power series  there is a polynomial 
 $$ P(Z)=\varphi_{d}Z^{p^{d}}+\varphi_{d-1}Z^{p^{d-1}}+\cdots +\varphi_{1}Z+\varphi_{0}\in  \mathbb{F}  [[t_{1},t_{2}]][Z]$$ such that $ P(f)=0. $ So we can write,
 
 \begin{equation}\label{eyy}
 \begin{array}{r@{\hspace{1 pt}} c@{\hspace{1 pt}}c@{\hspace{4pt}}l}
 
 \infty=\nu_{\omega}(0)=\nu_{\omega}(\varphi_{0}+\cdots + \varphi_{d}f^{p^{d}})\geq \mathrm{min}(\nu_{\omega}(\varphi_{0}),\cdots ,\nu_{\omega}(\varphi_{d}f^{p^{d}})) 
 
  \end{array}
  \end{equation}

 From this inequality it follows that the minimum happen at least twice. Then

  \begin{equation}\label{ea}
\begin{array}{r@{\hspace{1 pt}} c@{\hspace{1 pt}}c@{\hspace{4pt}}l}
 \sum_{r\in\Lambda} \mathrm{In}_{\omega}(\varphi_{r})(\mathrm{In}_{\omega}(f))^{p^{r}}=0,
\end{array}
 \end{equation}
where $ \Lambda $ is the set of indices $ r $ such that $ \upsilon_{\omega}(\varphi_{r}f^{p^{r}}) $ is the minimum in (\ref{eyy} ). 
 Note that,
 
  $ \mathrm{In}_{\omega}(f)=\sum_{\alpha_{2}=\nu_{\omega}(f)}f_{\alpha}t_{1}^{\alpha_{1}}t_{2}^{\alpha_{2}}$ and $ \mathrm{In}_{\omega}(\varphi_{r})=t_{2}^{\nu_{\omega}(\varphi_{r})} A_{r}(t_{1}).$
 By the equality (\ref{ea}) we get that $ \sum_{\alpha_{2}=\nu_{\omega}(f)}f_{\alpha}t_{1}^{\alpha_{1}}t_{2}^{\alpha_{2}} $ is algebraic over $  \mathbb{F} ((t_{1},t_{2})). $ Since $ f- \mathrm{In}_{\omega}(f) $ is algebraic over $\mathbb{F}((t_{1},t_{2})),$ we can repeat the above argument for 
 $ f- \mathrm{In}_{\omega}(f) $ and so on and so on  to conclude that series of the form $$ \sum_{i} f_{(i,-\frac{\omega_{1}}{p^{j_{1}}}-\cdots-\frac{\omega_{k}}{p^{j_{k}}})}t_{1}^{i}t_{2}^{-\frac{\omega_{1}}{p^{j_{1}}}-\cdots-\frac{\omega_{k}}{p^{j_{k}}}} $$ are algebraic over $  \mathbb{F}((t_{1},t_{2})).$ Now we can apply Lemma \ref{Afe} to conclude that sequences of the form $ b_{n} $ becomes eventually periodic. For sequences of the form $ c_{n} $ we can use a similar argument as above but now with $ \omega=(1,0). $
  
\end{proof}


By combining Theorem \ref{t1} and Theorem \ref{u} we get the following corollary.  
  \begin{coro}\label{c}
 Let $ \mathbb{F} $ be a finite field and $ s_{1},s_{2}\in\mathrm{ A}_{p}.$ Consider the series $$ f=\sum_{i,j>0} f_{(\frac{-s_{1}}{p^{i}},\frac{-s_{2}}{p^{j}})}t_{1}^{\frac{-s_{1}}{p^{i}}}t_{2}^{\frac{-s_{2}}{p^{j}}} \in  \mathbb{F} ((t_{1}^{\mathbb{Q}},t_{2}^{\mathbb{Q}})).$$ Then $ f $ is algebraic over $ \mathbb{F}((t_{1},t_{2}))$ if and only if there exist positive integers $ M $ and $ N $ such that every sequence of the form $ a_{n}= f_{(\frac{-s_{1}}{p^{i_{0}+n}},\frac{-s_{2}}{p^{j_{0}+n}})}$ where $ i_{0},j_{0}\in\mathbb{Z}_{> 0} $ has period $ N $after $ M $ terms and the sequences of the form $ b_{n}= f_{(\frac{-s_{1}}{p^{i}},\frac{-s_{2}}{p^{n}})}$ and $ c_{n}= f_{(\frac{-s_{1}}{p^{n}},\frac{-s_{2}}{p^{j}})}$ are eventually periodic.
 \end{coro}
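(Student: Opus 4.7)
The plan is to deduce Corollary \ref{c} directly by specializing Theorem \ref{t1} and Theorem \ref{u} to the particularly simple family of series appearing in the statement. First I would observe that the support of $f$ consists of points $(-s_1/p^i,-s_2/p^j)$ with $i,j\in\mathbb{Z}_{>0}$ and $s_1,s_2\in\mathrm{A}_p$, so it sits inside $\mathrm{T}_c\times\mathrm{T}_c$ with $c=\max(s_1,s_2)\leq p-1$: each exponent $-s_1/p^i$ corresponds to the single-digit expansion having $b_i=s_1$ and all other $b_k=0$, so $\sum b_k=s_1\leq c$. Thus the standing support hypothesis of both theorems is automatic.

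For the ``if'' direction I would invoke Theorem \ref{t1}. The three sequence families appearing there collapse immediately when one takes $e=k=1$ and (consequently) $l=r=1$ in its notation: the sum $-\upsilon_1/p^{i_1}-\cdots-\upsilon_{l-1}/p^{i_{l-1}}$ is empty, and the ``variable'' part reduces to $-(1/p^n)(\upsilon_1/p^{i_1})=-s_1/p^{i_1+n}$, and analogously in the second coordinate. The sequences $a_n,b_n,c_n$ of Theorem \ref{t1} thereby become exactly $f_{(-s_1/p^{i_0+n},-s_2/p^{j_0+n})}$, $f_{(-s_1/p^{i},-s_2/p^{j_0+n})}$, $f_{(-s_1/p^{i_0+n},-s_2/p^{j})}$, which (up to an irrelevant shift in the indexing variable) are precisely the sequences named $a_n,b_n,c_n$ in Corollary \ref{c}. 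The hypotheses of Theorem \ref{t1} then read identically to the hypotheses on the right-hand side of the corollary, so algebraicity of $f$ follows.

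For the ``only if'' direction I would apply Theorem \ref{u} with $a=b=1$, $\upsilon_1=s_1$ and $\omega_1=s_2$. The same specialization collapses its three families of sequences to the ones named in Corollary \ref{c}, and the conclusion of Theorem \ref{u} gives eventual periodicity of each. The one point that requires a little extra attention is producing \emph{uniform} integers $M,N$ valid for every choice of $(i_0,j_0)$ in the diagonal family $a_n$ (Theorem \ref{u} only asserts eventual periodicity sequence by sequence). To extract uniformity I would observe that a single algebraic relation $P(f)=0$ of $p$-degree $d$ produces, via the proof of Theorem \ref{u}, one and the same recurrence $c_0 a_n+c_1 a_{n+1}^p+\cdots+c_d a_{n+d}^{p^d}=0$ with fixed coefficients for every diagonal sequence; pigeonhole on the $|\mathbb{F}|^d$ possible $d$-tuples of states, together with Lemma \ref{even}, then bounds the period and the preperiod uniformly.

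There is no substantial technical obstacle here; the task is bookkeeping. The main point to verify carefully is that the universal quantifiers ``every sequence of the form\ldots'' in Theorems \ref{t1} and \ref{u} correspond, in the restricted setting of the corollary, exactly to varying $(i_0,j_0)$ over $\mathbb{Z}_{>0}\times\mathbb{Z}_{>0}$, and that the uniformity of $(M,N)$ in the corollary's hypothesis for $a_n$ can be read off from the structure of the recurrence used in the proof of Theorem \ref{u} rather than from its bare statement.
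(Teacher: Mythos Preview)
Your approach is exactly the paper's: the paper writes only ``By combining Theorem~\ref{t1} and Theorem~\ref{u} we get the following corollary,'' and you carry out precisely that specialization with $a=b=e=k=1$. One small correction: the claim that the proof of Theorem~\ref{u} yields \emph{one and the same} recurrence $c_0a_n+\cdots+c_da_{n+d}^{p^d}=0$ for every diagonal is not quite right, since the constants $c_i$ arise from Lemma~\ref{L1} applied to the particular $G_t$'s, which depend on $(i_0,j_0)$; however your conclusion is unaffected, because what is uniform is the degree $d$ and the threshold $d_0$ from which the recurrence holds, and that is all the pigeonhole argument in Lemma~\ref{even} needs to bound period and preperiod by $|\mathbb{F}|^d$ independently of the sequence. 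In this respect you are in fact more careful than the paper, which does not comment on the uniformity of $M,N$ at all.
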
 


 \end{document}